\def\sideremark#1{\ifvmode\leavevmode\fi\vadjust{\vbox to0pt{\vss
 \hbox to 0pt{\hskip\hsize\hskip1em
 \vbox{\hsize3cm\tiny\raggedright\pretolerance10000
 \noindent #1\hfill}\hss}\vbox to8pt{\vfil}\vss}}}
\newtheorem{theorem}{Theorem}[section]
\newtheorem{proposition}[theorem]{Proposition}
\newtheorem{lemma}[theorem]{Lemma}
\theoremstyle{definition}
\newtheorem{definition}[theorem]{Definition}
\theoremstyle{remark}
\newtheorem{remark}[theorem]{Remark}
\numberwithin{equation}{section}
\newcommand{%
	\begingroup
	%\fontsize{7pt}{12pt}
	\def\svgwidth{0.8\columnwidth}
	\import{./figures/}{.pdf_tex}
	\endgroup
}[2][0.8]{%
	\begingroup
	%\fontsize{7pt}{12pt}
	\def\svgwidth{#1\columnwidth}
	\import{./figures/}{#2.pdf_tex}
	\endgroup
}
\begin{document}

\title[A splitting theorem for 3-manifold]{A splitting theorem for 3-manifold with nonnegative scalar curvature and mean-convex boundary}

\author{Han Hong}
\address{Department of Mathematics and statistics \\ Beijing Jiaotong University \\ Beijing \\ China, 100044}
\email{hanhong@bjtu.edu.cn}

\author{Gaoming Wang}
\address{Yau Mathematical Sciences Center, Tsinghua University, 100084, Beijing, China}
\email{gmwang@tsinghua.edu.cn}

\begin{abstract}
We show that a Riemannian 3-manifold with nonnegative scalar curvature and mean-convex boundary is flat if it contains an absolutely area-minimizing (in the free boundary sense) half-cylinder or strip. Analogous results also hold for a $\theta$-energy-minimizing half-cylinder, or, under certain topological assumptions, a $\theta$-energy-minimizing strip for $\theta\in (0,\pi)$. 
\end{abstract}
\maketitle

\section{introduction}
The classical Cheeger-Gromoll splitting theorem states that a complete noncompact manifold $M$ with nonnegative Ricci curvature and containing a minimizing geodesic line is isometric to $N\times \mathbb{R}$, where $N$ is complete manifold with nonnegative Ricci curvature. If $M$ has the mean-convex boundary and  contains a minimizing geodesic ray starting from the boundary,  the splitting conclusion fails due to a simple counterexample: the convex domain bounded by a paraboloid in $\mathbb{R}^n$. However, if one requires the boundary to be compact, a similar argument to Cheeger-Gromoll's one can be used to deduce the splitting result. This was proved by A. Kasue \cite[Theorem C]{kasue}. See also C. Croke and B. Kleiner \cite[Theorem 2]{croke-bruce} for a different approach.

A natural direction of extension of Cheeger-Gromoll splitting theorem is to use minimal hypersurface. One question is whether the splitting theorem still holds with area-minimizing hypersurface replacing minimizing geodesic line. In dimension higher than $9$, the answer is negative. There exists a non-flat area-minimizing hypersurface in $\mathbb{R}^n$ for $n\geq 9$, yet Euclidean space is not isometric to a product of such hypersurface with $\mathbb{R}$.
On the other hand, P. Nabonnand \cite{Nabonnand} showed that this is false with dimension greater than $3.$ However, Anderson and Rodriguez \cite{anderson-rodriguez} proved that, if an oriented 3-manifold $M$ has nonnegative Ricci curvature and bounded sectional curvature, moreover, $M$ contains a complete oriented area-minimizing surface, then the universal cover of $M$ is isometric to a product. The boundedness of sectional curvature was removed by Liu \cite{liugang}. This, as a byproduct, proves the Milnor conjecture in dimension 3. 

The doubled Schwarzchild manifold shows that replacing nonnegative Ricci curvature with a weaker curvature assumption, i.e., nonnegative scalar curvature, does not yield the splitting theorem. However, in dimension three, O. Chodosh, M. Eichmair and V. Moraru \cite{chodoshsplittingtheorem} proved a splitting theorem for scalar curvature: Let $M$ be an orientable, complete 3-manifold with nonnegative scalar curvature. If $M$ contains a properly embedded area-minimizing cylinder. Then, up to scaling, a cover of $M$ is isometric to $\mathbb{S}^1\times \mathbb{R}^2.$ Thus they answered a question by M. Cai and G. Galloway \cite{caigalloway}. This is also related to the stability classification of two-sided stable minimal surfaces in 3-manifold with nonnegative scalar curvature. D. Fischer-Colbrie and R. Schoen \cite{Fischer-Colbrie-Schoen-The-structure-of-complete-stable} proved that such surfaces, if noncompact, must be topologically plane or cylinder (see Lemma \ref{fischer-colbrie} below). Note that in Chodosh-Eichmair-Moraru's theorem, area-minimizing can not be replaced by stability. Counterexample was provided in \cite{caigalloway} that $\mathbb{R}\times \mathbb{S}^1$ in $\mathbb{R}\times \mathcal{S}^2$ where $\mathcal{S}^2$ is a smooth flattened sphere (around the equator) is a stable minimal cylinder. 

It is natural to ask whether such splitting result still holds for manifolds with boundary in the presence of minimizing surface with boundary. In fact, we prove
\begin{theorem}\label{maintheorem}
    Let $(M,g)$ be a smooth connected, orientable, complete noncompact Riemannian 3-manifold with nonnegative scalar curvature and mean-convex boundary. Assume that $M$ contains a properly embedded surface $\Sigma$ that is either an absolutely area-minimizing half-cylinder or an absolutely area-minimizing strip in the free boundary sense. Then $(M,g)$ is flat. In fact, up to scaling, a cover of $(M,g)$ is isometric to $\mathbb{S}^1\times \mathbb{R}^2_+$ in the former case, while $(M,g)$ is isometric to $[0,1]\times \mathbb{R}^2$ in the latter case.
\end{theorem}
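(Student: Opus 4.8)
The plan is to adapt the scheme of Chodosh-Eichmair-Moraru \cite{chodoshsplittingtheorem} to the free boundary setting: first extract rigidity of $\Sigma$ from the second variation, then propagate it to a tubular neighbourhood as a Riemannian product, and finally globalize by an open-closed argument. Since $\Sigma$ is absolutely area-minimizing in the free boundary sense it is stable, so
\[
\int_\Sigma |\nabla_\Sigma\phi|^2\,d\mu \;\ge\; \int_\Sigma\bigl(\Ric(\nu,\nu)+|A_\Sigma|^2\bigr)\phi^2\,d\mu \;+\; \int_{\partial\Sigma} \mathrm{II}^{\partial M}(\nu,\nu)\,\phi^2\,ds
\]
for all compactly supported Lipschitz $\phi$, where $\nu$ is the unit normal of $\Sigma$. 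Combining this with the traced Gauss equation $2K_\Sigma = R_g - 2\Ric(\nu,\nu) - |A_\Sigma|^2$ (recall $H_\Sigma\equiv 0$), and using that the free boundary condition identifies the geodesic curvature $k_g$ of $\partial\Sigma$ in $\Sigma$ with $\mathrm{II}^{\partial M}(T,T)$ --- so that $\mathrm{II}^{\partial M}(\nu,\nu) + k_g = H^{\partial M}$ along $\partial\Sigma$ --- the inequality rearranges to
\[
\int_\Sigma \bigl(|\nabla_\Sigma\phi|^2 + K_\Sigma\,\phi^2\bigr)\,d\mu \;\ge\; \tfrac12\!\int_\Sigma\bigl(R_g+|A_\Sigma|^2\bigr)\phi^2\,d\mu \;+\; \int_{\partial\Sigma}\bigl(H^{\partial M}-k_g\bigr)\phi^2\,ds .
\]

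To exploit this I would run the Fischer-Colbrie-Schoen conformal argument (Lemma~\ref{fischer-colbrie}) in a form with boundary: stability yields $u>0$ on $\overline{\Sigma}$ solving the Jacobi equation with the natural Robin boundary condition, and then the conformal metric $u^2 g_\Sigma$ has Gauss curvature $u^{-2}\bigl(\tfrac12 R_g+\tfrac12|A_\Sigma|^2+|\nabla\log u|^2\bigr)\ge 0$ and boundary geodesic curvature $u^{-1}H^{\partial M}\ge 0$ by mean-convexity. Doubling $(\Sigma,u^2 g_\Sigma)$ across $\partial\Sigma$ produces a complete surface with nonnegative curvature measure whose underlying topology is a cylinder in both cases ($\chi=0$), so by Cohn-Vossen its total curvature vanishes; hence $R_g\equiv 0$ on $\Sigma$, $A_\Sigma\equiv 0$, $u$ is constant, $H^{\partial M}\equiv 0$ along $\partial\Sigma$, and therefore $\mathrm{II}^{\partial M}(\nu,\nu)\equiv 0$, $k_g\equiv 0$, $\Ric(\nu,\nu)\equiv 0$ on $\Sigma$, and $K_\Sigma\equiv 0$. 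Thus $\Sigma$ is a flat, totally geodesic, free boundary minimal surface, $\partial M$ is totally geodesic along $\partial\Sigma$, and the constant function $1$ spans the kernel of the free boundary Jacobi operator. (Alternatively, one can show $\Sigma$ has linear area growth by a free boundary comparison, hence is parabolic, and obtain the same conclusions by inserting logarithmic cutoffs $\phi_j\to 1$ and invoking the Gauss-Bonnet formula on an exhaustion; or double $M$ across its mean-convex boundary and $\Sigma$ across $\partial\Sigma$ to reduce directly to \cite{chodoshsplittingtheorem}, at the cost of working with a Lipschitz metric whose scalar curvature is only distributionally nonnegative.)

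To propagate the rigidity, two-sidedness of $\Sigma$ together with the one-dimensional kernel of its Jacobi operator (spanned by a positive function) give, via the implicit function theorem, a foliation $\{\Sigma_t\}_{|t|<\epsilon}$ of a neighbourhood of $\Sigma$ by properly embedded free boundary minimal surfaces with $\Sigma_0=\Sigma$; the positive lapse $u_t$ solves $L_{\Sigma_t}u_t=0$ with the Robin condition, so each $\Sigma_t$ is stable and diffeomorphic to $\Sigma$, inherits all the preceding rigidity, and then $L_{\Sigma_t}=\Delta_{\Sigma_t}$ forces $u_t$ to be a positive harmonic function with vanishing Neumann data on the (now flat, hence parabolic) leaf $\Sigma_t$, so $u_t$ is constant. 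After reparametrizing $t$, this neighbourhood is the flat Riemannian product $\Sigma\times(-\epsilon,\epsilon)$, with $\partial M$ meeting it in $\partial\Sigma\times(-\epsilon,\epsilon)$. Now let $(a,b)\ni 0$ be the maximal interval over which this product structure persists with totally geodesic, flat, free boundary leaves: openness is the previous step applied at an endpoint leaf, and closedness holds because the leaves are isometric flat copies of $\Sigma$ lying within bounded distance of $\Sigma$ in the complete, curvature-free region, hence subconverge to a limiting leaf with the same properties. Therefore $(a,b)=\mathbb{R}$, and the resulting local isometry $\Sigma\times\mathbb{R}\to M$ from a complete manifold is a Riemannian covering: in the half-cylinder case this exhibits a cover of $M$ isometric, up to scaling, to $\mathbb{S}^1\times\mathbb{R}^2_+$, while in the strip case $\Sigma$ is simply connected and, being absolutely area-minimizing, separates $M$, so the covering is trivial and $(M,g)$ is isometric to $[0,1]\times\mathbb{R}^2$.

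The main obstacle is the first stage: making the conformal (or cutoff) argument rigorous in the presence of the noncompact end(s) and the boundary contribution --- in particular controlling the completeness of the auxiliary conformal metric and, for the strip, the contributions of its two ends to Gauss-Bonnet --- which is where the free boundary geometry genuinely enters. Keeping the foliation leaves' boundaries inside $\partial M$ in the propagation and globalization (which rests on $\mathrm{II}^{\partial M}(\nu,\nu)\equiv 0$ along $\partial\Sigma$) is the corresponding point requiring care.
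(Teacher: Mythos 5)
Your first stage (stability plus the Fischer--Colbrie--Schoen/Cohn--Vossen argument giving that $\Sigma$ is flat, totally geodesic, with $H^{\partial M}=0$ and geodesic boundary) is fine; it is exactly the content of Lemma \ref{lemma:flatnessBdry}, which the paper simply cites. The genuine gap is in your propagation step. You invoke the implicit function theorem to foliate a one-sided neighbourhood of $\Sigma$ by free boundary minimal leaves, using ``the one-dimensional kernel of the Jacobi operator spanned by a positive function.'' But $\Sigma$ is a \emph{noncompact} half-cylinder or strip: its Jacobi operator (here just the Laplacian with Neumann condition) is not Fredholm on any standard space, its kernel is not one-dimensional without decay hypotheses (it contains, e.g., linearly growing harmonic functions), and no implicit function theorem applies. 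This is precisely the obstruction the paper is built to circumvent: instead of an IFT foliation it uses the Carlotto--Chodosh--Eichmair metric perturbations $\hat g(r,t)$, solves capillary/free-boundary energy minimization problems in the prism-shaped domains $B_h^\delta$ via Li's existence and regularity theory \cite{Li2019comparison}, forces the minimizers off $\Sigma$ by an energy comparison against the perturbed metric, and passes to limits as $h\to\infty$ and $t\to 0$ (Proposition \ref{prop_convergence}). Your proposal never produces these nearby surfaces by any valid mechanism, so the local product structure, the open--closed globalization built on it, and even the intermediate claim that the full ambient curvature tensor vanishes along $\Sigma$ (stability only gives $\Ric(\nu,\nu)=0$ there; the paper needs the graphical limit lapse $f$ of the one-sided minimizing family to kill $R_M(X,\nu,Y,\nu)$) are all unsupported.

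A second, related omission: once nearby surfaces are produced as limits of minimizers, their topology can degenerate --- they may a priori be discs, half-planes, annuli, or half-cylinders rather than copies of $\Sigma$ --- and ruling out the bad types (Lemma \ref{lem_topology}, via Lemma \ref{lem:separate}, the cover of Lemma \ref{cover}, and area/energy comparisons using the homologically$^*$ minimizing property) is where the \emph{absolutely minimizing} hypothesis, as opposed to mere stability, is essentially used; the annulus case is then handled separately by the compact rigidity result, Lemma \ref{lemma:flatAnnulus}. In your scheme the minimizing hypothesis only appears at the very end (to get separation in the strip case), while the propagation relies on stability alone; since the whole difficulty of the theorem (as in \cite{chodoshsplittingtheorem}, and as the Cai--Galloway stable-cylinder example warns) lies in upgrading local stability information to a global family of controlled-topology minimizing leaves, the proposal as written does not close the argument.
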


In the above theorem, a half-cylinder means a topological surface that is homeomorphic to $\mathbb{S}^1\times [0,\infty)$, while a strip means a topological surface that is homeomorphic to $[a,b]\times \mathbb{R}$. Besides, the area-minimzing property in the free boundary sense is defined in Section \ref{sec:energy_minimizing_surfaces}. Geometrically, a surface with such property is a stable free boundary minimal surface. Comparing to the splitting theorem by \cite{kasue,croke-bruce}, a feature of Theorem \ref{maintheorem} is that we do not assume the compactness of the boundary of $M$ and require no information of the number of the boundary components of $M$. 

In fact, we prove a more general result that uses the $\theta$-capillary surface as the splitting surface. To achieve that, we shall define several definitions of minimizing surfaces in Section \ref{sec:energy_minimizing_surfaces}, including absolutely $\theta$-energy-minimizing, homologically $\theta$-energy-minimizing and homologically$^{*}$ $\theta$-energy-minimizing. With the help of these concepts, we are able to show that
\begin{theorem}\label{maintheorem2}
    Let $(M,g)$ be a smooth connected, orientable, complete noncompact Riemannian 3-manifold with nonnegative scalar curvature and mean-convex boundary. Let $\theta\in (0,\pi)$. Assume that $M$ satisfies one of the following conditions:
    
    (1) $M$ contains a properly embedded surface $\Sigma$ that is  an absolutely $\theta$-energy-minimizing half-cylinder.
    
    (2) $M$ contains a properly embedded surface $\Sigma$ that is an absolutely $\theta$-energy minimizing strip whose boundaries lie in two different components of $\partial M$.
    
    Then $(M,g)$ is flat. In fact, up to scaling, a cover of $(M,g)$ is isometric to either
    $$\left\{ (x_1,x_2,x_3)\in \mathbb{S}^1 \times \mathbb{R}^2:x_2\ge \cot \theta x_3 \right\}$$
    or 
    $$\mathbb{S}^1\times[0,1]\times \mathbb{R}.$$
\end{theorem}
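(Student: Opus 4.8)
To prove Theorem~\ref{maintheorem2} the plan is to adapt the argument of Chodosh--Eichmair--Moraru \cite{chodoshsplittingtheorem} to the capillary setting, with the angle $\theta$ and the mean-convexity of $\partial M$ entering through the boundary terms in the second variation. \textbf{Step~1: stability and the Gauss equations.} Since $\Sigma$ is absolutely $\theta$-energy-minimizing it is a stable $\theta$-capillary minimal surface, so the second variation of $\theta$-energy is nonnegative for every $\phi\in C^\infty(\Sigma)$ (the boundary being free, $\phi|_{\partial\Sigma}$ is unconstrained). Into the capillary stability inequality
\[
\int_\Sigma\bigl(|\nabla\phi|^2-(|A_\Sigma|^2+\Ric(\nu,\nu))\,\phi^2\bigr)\,\darea\;-\;\int_{\partial\Sigma}q_\theta\,\phi^2\;\geq\;0
\]
I substitute the interior Gauss equation $|A_\Sigma|^2+\Ric(\nu,\nu)=\tfrac12|A_\Sigma|^2+\tfrac12R-K_\Sigma$ (valid since $H_\Sigma=0$) and, along $\partial\Sigma$, the capillary identities relating the Robin weight $q_\theta$ to the geodesic curvature $\kappa$ of $\partial\Sigma$ in $\Sigma$ and to the second fundamental form of $\partial M$; using $\sin\theta>0$ and $H_{\partial M}\geq0$ this yields
\[
\int_\Sigma\bigl(|\nabla\phi|^2+K_\Sigma\,\phi^2\bigr)\,\darea\;+\;\int_{\partial\Sigma}\kappa\,\phi^2\;\geq\;\int_\Sigma\tfrac12\bigl(|A_\Sigma|^2+R\bigr)\phi^2\,\darea\;+\;\int_{\partial\Sigma}H_{\partial M}\,\phi^2\;\geq\;0 .
\]
In words, the conformal Jacobi operator $-\Delta_\Sigma-K_\Sigma$ with the Robin boundary condition determined by $\kappa$ is nonnegative, with defect bounded below by $R$, $|A_\Sigma|^2$ and $H_{\partial M}$.

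\textbf{Step~2: parabolicity and infinitesimal rigidity.} By hypothesis $\Sigma$ is a half-cylinder or a strip, so by the Fischer--Colbrie--Schoen analysis \cite{Fischer-Colbrie-Schoen-The-structure-of-complete-stable} (cf.\ Lemma~\ref{fischer-colbrie}) its induced metric is conformal to a flat half-cylinder, respectively a flat strip, and is in particular parabolic as a surface with boundary --- e.g.\ its double across $\partial\Sigma$ is a parabolic surface without boundary, so a bounded function on $\Sigma$ that is superharmonic in the interior with the appropriate Neumann-type sign on $\partial\Sigma$ must be constant. Combining parabolicity with the inequality of Step~1 (following the capillary analogue of \cite{chodoshsplittingtheorem}: a conformal change to a flat metric with geodesic boundary, together with the positive capillary Jacobi field $u$ furnished by stability) forces every defect term to vanish: $K_\Sigma\equiv0$, $\kappa\equiv0$, $H_{\partial M}|_{\partial\Sigma}\equiv0$, $A_\Sigma\equiv0$, $R|_\Sigma\equiv0$, $\Ric(\nu,\nu)|_\Sigma\equiv0$, and $u$ is a positive constant. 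In particular $\Sigma$ is totally geodesic and flat, $\partial\Sigma$ is a geodesic of $\Sigma$, and the ambient geometry degenerates appropriately along $\Sigma$.

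\textbf{Step~3: foliation, globalization, and the model.} Using the constant Jacobi field I produce, by the implicit function theorem, a smooth family $\{\Sigma_s\}_{|s|<\varepsilon}$ of $\theta$-capillary minimal surfaces with $\Sigma_0=\Sigma$ foliating a neighborhood of $\Sigma$ (working equivariantly in the compact $\mathbb{S}^1$-factor in the half-cylinder case, and on the strip with the uniform estimates coming from the flat product geometry). Each leaf is again stable and Step~2 applies to it, so every $\Sigma_s$ is totally geodesic, flat, with $R\equiv0$ along it; a short computation then shows the neighborhood is the Riemannian product $\Sigma\times(-\varepsilon,\varepsilon)$ with $\partial M$ the product $\partial\Sigma\times(-\varepsilon,\varepsilon)$, each leaf meeting $\partial M$ at the fixed angle $\theta$. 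Let $I\subset\mathbb{R}$ be the maximal interval over which this foliation extends to an isometric product $\Sigma\times I$ --- after passing to a cover of $M$ so that the leaves unwrap to embedded surfaces, which is where the cover in the statement arises; $I$ is open by the preceding sentence and closed because along $\partial I$ the leaves stay totally geodesic, flat, with uniformly bounded geometry, so the foliation continues. Thus $I=\mathbb{R}$, a cover of $(M,g)$ is isometric to the flat product $\Sigma\times\mathbb{R}$, and $(M,g)$ is flat. Finally, a direct computation using the constant contact angle $\theta$ identifies this product with $\{(x_1,x_2,x_3)\in\mathbb{S}^1\times\mathbb{R}^2:x_2\geq\cot\theta\,x_3\}$ in case~(1) and --- using that the two boundary lines of the strip lie on distinct components of $\partial M$ --- with $\mathbb{S}^1\times[0,1]\times\mathbb{R}$ in case~(2).

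\textbf{Expected main obstacle.} The technical core is the globalization in Step~3: promoting the local foliation to a global one and controlling its interaction with the (possibly several, possibly noncompact) components of $\partial M$ --- precisely where \cite{chodoshsplittingtheorem} concentrates, now further complicated by the free/capillary boundary. Close behind are pinning down the capillary stability inequality and the capillary Gauss identities so that mean-convexity enters with the correct sign (Step~1), and making the parabolicity argument rigorous for a surface \emph{with} boundary whose Jacobi field need not be a priori bounded (Step~2, likely by first establishing flatness and only then bounding $u$). The ``two distinct components'' hypothesis in case~(2) must be invoked at exactly the right point --- to exclude the degenerate strips responsible for the failure of the naive statement, in the spirit of the counterexamples recalled above.
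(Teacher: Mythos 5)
Your Steps 1--2 are essentially a reproof of what the paper imports as Lemma \ref{lemma:flatnessBdry} (from \cite{hongcrelle}): stability plus parabolicity gives that a noncompact stable capillary surface which is a half-cylinder or strip is totally geodesic and flat, with $R$, $H_{\partial M}$ and the normal Ricci curvature vanishing along it. That is fine, but it is only \emph{infinitesimal} rigidity along $\Sigma$. The genuine gap is Step 3. You propose to produce a foliation by $\theta$-capillary minimal surfaces near $\Sigma$ ``by the implicit function theorem,'' but $\Sigma$ is noncompact, and the IFT is simply not available here: the capillary Jacobi operator with its Robin boundary condition on a half-cylinder or strip is not invertible (or even Fredholm) in the relevant spaces, and the equivariance you invoke does not exist --- the $\mathbb{S}^1$-factor is a feature of $\Sigma$ alone, not of the ambient $\hat M$, so there is no symmetry to work equivariantly with respect to. This is exactly the obstacle the paper is built to circumvent: instead of an IFT foliation, it perturbs the metric \`a la Carlotto--Chodosh--Eichmair near a point off $\Sigma$, minimizes a $\theta$-energy among Caccioppoli sets in exhausting prism-shaped domains $B_h^\delta$ (the prism geometry and Li's existence/regularity results \cite{Li2019comparison} are needed precisely to avoid corner regularity issues that a Plateau problem with partially fixed boundary would create), uses the \emph{absolute} $\theta$-energy-minimizing property of $\Sigma$ in a quantitative comparison to force the minimizers into the perturbed region, and then passes to limits as $h\to\infty$ and $t,r\to 0^+$.

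A second, related omission: after the limit is taken, the approximating surfaces $\hat\Sigma(r,t)$ need not have the topology of $\Sigma$ --- they could a priori be discs, half-planes, annuli, half-cylinders, or strips --- and ruling out the bad cases (Lemma \ref{lem_topology} in the paper) again uses the minimizing property through area/energy comparisons and a separation lemma, not stability alone; recall the Cai--Galloway stable cylinder counterexample shows stability cannot suffice anywhere in this argument. This is also the precise point where the hypothesis in case (2) --- that the two boundary curves of the strip lie on different components of $\partial M$ --- is used: it forces $\hat\Sigma(r,t)$ to have at least two boundary components, so the disc and half-plane cases never arise; for a general $\theta$ the $\cos\theta$ term has indefinite sign and the energy-comparison exclusion of those topologies (which works in the free-boundary area-minimizing setting) breaks down. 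Your proposal defers this to ``invoking the hypothesis at exactly the right point'' without identifying the mechanism, and your closedness/continuation step likewise presupposes nearby capillary minimal leaves whose existence is the whole difficulty. The final identification of the model via graphical limits, a positive harmonic function with Neumann condition on the half-cylinder or strip, and density of the resulting family of minimizing surfaces is in the spirit of what you sketch, but it only becomes available once the minimization-and-topology-control machinery above is in place.
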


Let us emphasis that the absolutely area-minimizing in the free boundary sense can imply $\frac{\pi}{2}$-energy-minimizing. However, these two notions are not equivalent. For precise definitions, see Section \ref{sec:energy_minimizing_surfaces}.

% In fact, when $\theta=\pi/2$, Theorem \ref{maintheorem} is stronger than Theorem \ref{maintheorem2}. This will be clear in the Section \ref{sec:energy_minimizing_surfaces}. 

To prove the splitting result, the basic philosophy is to construct a desirable foliation (\cite{anderson-rodriguez,liugang,chodoshsplittingtheorem}) or bifurcation around the base surface (see e.g. \cite{Espinar}). Instead of using implicit function theorem in the case of minimizing surface being compact, one uses the limits of solutions of Plateau-type problems when the base surface is noncompact. Applying perturbation argument one can constrain the convergence sequence and obtain a nonempty smooth limiting surface. In our theorem, when the minimizing surface is a half-cylinder or a long strip, the goal is to construct a sequence of minimizing half-cylinders or minimizing strips. Similar to \cite{chodoshsplittingtheorem}, one of the challenges is that the geometric convergence of solutions of the minimizing problem may produce a limit with a topological type different from the original half-cylinder or strip. Moreover, there are more possibilities when the surface is noncompact and has non-empty boundary. The topology of minimizing surfaces in manifolds of nonnegative scalar curvature and mean-convex boundary can be classified by Fischer-Cobrie and Schoen \cite{Fischer-Colbrie-Schoen-The-structure-of-complete-stable}, Hong and Saturnino\cite{hongcrelle}. Certain topological types, such as plane or cylinder, can be easily ruled out due the presence of the boundary. This differs from Chodosh-Eichmair-Moraru\cite{chodoshsplittingtheorem}. For disc-type and half-plane-type, we can adapt the idea from \cite{chodoshsplittingtheorem}, using energy comparison arguments to exclude them. 

The second challenge is constructing an appropriate minimizing sequence of capillary surfaces. Unlike the case in \cite{chodoshsplittingtheorem}, we cannot use the solution to a Plateau problem with a partial fixed boundary portion, as we currently do not have regularity results near the corner. Instead, we construct a prism-shaped domain and find a minimizer to a certain energy functional in this domain.
%(See Section \ref{capillarysurfaces}). 
Then using the results from \cite{Li2019comparison}, we obtain the existence of the desired minimizer.  

It is worth noting that the extra assumption in (2) of Theorem \ref{maintheorem2} is necessary since the $\theta$-energy functional involves the term $\cos\theta$, which has an indefinite sign, so that the energy comparison no longer works in the strip case. While we believe that this assumption on boundary component is redundant, we do not know how to remove it yet. On the other hand, let us mention that recently there have been many generalizations of splitting results to higher dimensions or complex cases. Interested readers are referred to \cite{EspinarRosenberg,zhu1,zhu2}.

The paper will be arranged as follows.
In Section \ref{background}, we introduce some results on the classification of stable minimal surfaces in both the closed case and the capillary case. In Section \ref{sec:energy_minimizing_surfaces}, we introduce a variety of definitions of minimizing capillary surfaces that are used in our main proof.
In Section \ref{sec:proof} we deal with the proof of the main theorem, Theorem \ref{maintheorem} and Theorem \ref{maintheorem2}.

\subsection{Acknowledgements}
The first author is supported by NSFC No. 1240
-1058 and the Talent
Fund of Beijing Jiaotong University No. 2024XKRC008. The second author is supported by the China Postdoctoral Science Fundation (No. 2024M751604).

\section{Stability of surfaces}\label{background}
In this section, we collect some results on the classification and rigidity of complete stable minimal surfaces in three-dimensional Riemannian manifolds with nonnegative scalar curvature. 

\subsection{Surfaces without boundary}
The following result is due to R. Schoen and S.T. Yau \cite{Schoen-Yau-positive-Ricci-curvature} and D. Fischer-Colbrie and R. Schoen \cite{Fischer-Colbrie-Schoen-The-structure-of-complete-stable}. We use $R_g$ to denote the scalar curvature of the metric $g$ hereafter.
\begin{lemma}
    Let $(M,g)$ be a Riemannian 3-manifold with nonnegative scalar curvature. Let $\Sigma$ be a closed, two-sided, complete stable minimal surface in $M$. Then $\Sigma$ is topologically a sphere or a torus. In the latter case, the surface is totally geodesic and flat. Moreover, the ambient scalar curvature $R_g$ vanishes along $\Sigma.$
\end{lemma}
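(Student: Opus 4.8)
The plan is the classical second-variation argument of Schoen--Yau and Fischer-Colbrie--Schoen. Since $\Sigma$ is two-sided it carries a global unit normal $\nu$, and stability means
\[
  \int_\Sigma |\nabla \phi|^2 \, \darea \;\ge\; \int_\Sigma \bigl(|A|^2 + \Ric_g(\nu,\nu)\bigr)\phi^2 \, \darea
  \qquad \text{for all } \phi \in C^\infty(\Sigma),
\]
where $A$ is the second fundamental form of $\Sigma$. As $\Sigma$ is closed we may take the test function $\phi \equiv 1$, which gives $\int_\Sigma \bigl(|A|^2 + \Ric_g(\nu,\nu)\bigr)\,\darea \le 0$.

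Next I would rewrite the integrand using the traced Gauss equation in dimension three. Since $\Sigma$ is minimal,
\[
  2K_\Sigma \;=\; R_g - 2\Ric_g(\nu,\nu) - |A|^2,
\]
with $K_\Sigma$ the Gauss curvature of $\Sigma$, so that $|A|^2 + \Ric_g(\nu,\nu) = \tfrac12 R_g + \tfrac12 |A|^2 - K_\Sigma$. Substituting and using Gauss--Bonnet,
\[
  \int_\Sigma \Bigl(\tfrac12 R_g + \tfrac12 |A|^2\Bigr)\darea \;\le\; \int_\Sigma K_\Sigma \,\darea \;=\; 2\pi\chi(\Sigma).
\]
Because $R_g \ge 0$, the left-hand side is nonnegative, hence $\chi(\Sigma) \ge 0$; as $\Sigma$ is a closed orientable surface, this forces $\Sigma$ to be a sphere or a torus.

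In the torus case $\chi(\Sigma) = 0$, so the last display forces $R_g \equiv 0$ along $\Sigma$ and $|A| \equiv 0$, i.e. $\Sigma$ is totally geodesic and $R_g$ vanishes on $\Sigma$. Finally, equality throughout shows $\int_\Sigma \bigl(|A|^2 + \Ric_g(\nu,\nu)\bigr)\,\darea = 0$, so $\phi \equiv 1$ realizes the bottom eigenvalue $0$ of the Jacobi operator $-\bigl(\Delta_\Sigma + |A|^2 + \Ric_g(\nu,\nu)\bigr)$ and is therefore a Jacobi field; hence $|A|^2 + \Ric_g(\nu,\nu) \equiv 0$ pointwise, and with $|A| \equiv 0$ this yields $\Ric_g(\nu,\nu) \equiv 0$. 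Plugging back into the Gauss equation gives $K_\Sigma \equiv 0$, so $\Sigma$ is flat. The only slightly delicate point is this last passage from an integral identity to pointwise vanishing; it rests on the standard fact that a constant test function achieving equality in the stability inequality must be a first eigenfunction of the Jacobi operator (one could instead argue via a conformal change of metric on $\Sigma$, as in Fischer-Colbrie--Schoen, but the eigenfunction argument is shorter).
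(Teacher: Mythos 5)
Your proof is correct and is exactly the standard second-variation argument of Schoen--Yau and Fischer-Colbrie--Schoen that the paper cites for this lemma (the paper itself gives no proof, only the reference): test with $\phi\equiv 1$, rewrite via the traced Gauss equation, apply Gauss--Bonnet, and in the equality case use that the constant function is a Jacobi field to get $|A|^2+\Ric_g(\nu,\nu)\equiv 0$ pointwise and hence $K_\Sigma\equiv 0$. The only implicit point worth noting is that ruling out $\mathbb{RP}^2$ and the Klein bottle (i.e.\ asserting ``closed orientable surface'' so that $\chi(\Sigma)\ge 0$ gives sphere or torus) uses that $\Sigma$ is two-sided in the paper's orientable ambient $3$-manifold.
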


The following result is due to D. Fischer-Colbrie and R. Schoen\cite[Theorem 3]{Fischer-Colbrie-Schoen-The-structure-of-complete-stable} and C. Carlotto and O. Chodosh and M. Eichmair\cite{Carlotto-Chodosh-Eichmair-PMT} (see also \cite{Espinar}).
\begin{lemma}\label{fischer-colbrie}    Let $(M,g)$ be a Riemannian 3-manifold with nonnegative scalar curvature. Let $\Sigma$ be a noncompact, two-sided, complete stable minimal surface in $M$. Then $\Sigma$ is conformally diffeomorphic to a plane or a punctured plane (cylinder). In the latter case, the surface is totally geodesic and flat. Moreover, the ambient scalar curvature $R_g$ vanishes along $\Sigma.$
\end{lemma}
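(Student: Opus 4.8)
The plan is to follow the classical argument of Fischer-Colbrie and Schoen, so I will only indicate the steps. First I would feed the Gauss equation into the stability inequality. For a two-sided minimal surface $\Sigma\subset M$ with unit normal $\nu$ and second fundamental form $A$, the Gauss equation gives $|A|^2+\Ric_g(\nu,\nu)=\tfrac12 R_g+\tfrac12|A|^2-K_\Sigma$, where $K_\Sigma$ is the intrinsic Gauss curvature; hence stability reads
\[
\int_\Sigma |\nabla\varphi|^2 + K_\Sigma\,\varphi^2 \;\ge\; \int_\Sigma\Big(\tfrac12 R_g + \tfrac12|A|^2\Big)\varphi^2 \;\ge\; 0
\qquad\text{for all }\varphi\in C_c^\infty(\Sigma),
\]
using $R_g\ge 0$. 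Thus the Schr\"odinger operator $-\Delta_\Sigma+K_\Sigma$ is nonnegative on $\Sigma$. By the standard exhaustion-and-Harnack argument of Fischer-Colbrie–Schoen, nonnegativity of the Jacobi operator $L=\Delta_\Sigma+(|A|^2+\Ric_g(\nu,\nu))$ produces a function $u\in C^\infty(\Sigma)$ with $u>0$ and $Lu=0$; equivalently, $\Delta_\Sigma u - K_\Sigma u = -\big(\tfrac12 R_g+\tfrac12|A|^2\big)u\le 0$.

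Next I would perform the conformal change $\bar g=u^2 g_\Sigma$ on $\Sigma$. The two-dimensional transformation law $\bar K=u^{-2}\big(K_\Sigma-\Delta_\Sigma\log u\big)$ together with the equation for $u$ gives the pointwise identity
\[
\bar K \;=\; \frac{1}{u^2}\Big(\tfrac12 R_g + \tfrac12|A|^2 + \frac{|\nabla u|^2}{u^2}\Big)\;\ge\;0 ,
\]
so $(\Sigma,\bar g)$ has nonnegative Gauss curvature and $d\bar A=u^2\,dA_\Sigma$. The delicate point — which I expect to be the main obstacle — is to show that $\bar g$ is \emph{complete}; this is the genuinely analytic step in Fischer-Colbrie–Schoen, relying on a more careful use of the construction of $u$ (a Harnack-type lower bound preventing $u$ from decaying too fast along curves running to infinity). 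Granting completeness, Huber's theorem (the negative part of $\bar K$ is integrable, being zero) shows $\Sigma$ is conformally a closed Riemann surface with finitely many punctures, and the Cohn-Vossen inequality $\int_\Sigma \bar K\,d\bar A\le 2\pi\chi(\Sigma)$ combined with $\bar K\ge 0$ forces $\chi(\Sigma)\ge 0$. Since $\Sigma$ is noncompact, this leaves only $\Sigma$ conformally equivalent to the plane $\bC$ or to the punctured plane $\bC\setminus\{0\}$.

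It remains to treat the rigidity in the punctured-plane (cylinder) case. Here $\chi(\Sigma)=0$, so Cohn-Vossen gives $\int_\Sigma\bar K\,d\bar A\le 0$; together with $\bar K\ge 0$ this forces $\bar K\equiv 0$, whence the displayed formula yields $R_g|_\Sigma\equiv 0$, $A\equiv 0$, and $\nabla u\equiv 0$. Thus $\Sigma$ is totally geodesic, $u$ is a positive constant, and then $K_\Sigma=\Delta_\Sigma u/u\equiv 0$, so $\Sigma$ is flat and $R_g$ vanishes along $\Sigma$, as claimed (indeed $g_\Sigma$ is then itself complete and flat, so the cylinder is isometrically a flat product $\mathbb S^1\times\bR$). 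In summary, apart from the completeness of $\bar g$ and the appeal to the Huber–Cohn-Vossen theory, every step is a direct computation with the Gauss equation and the stability inequality.
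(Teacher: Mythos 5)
This lemma is not proved in the paper at all: it is quoted from Fischer-Colbrie--Schoen (Theorem 3) together with Carlotto--Chodosh--Eichmair, so the relevant question is whether your sketch would stand as a self-contained argument or as an accurate reduction to those references. Your preliminary computations are correct (Gauss equation into the stability inequality, the existence of a positive Jacobi function $u$ on a complete stable surface, and the identity $\bar K = u^{-2}\bigl(\tfrac12 R_g+\tfrac12|A|^2+|\nabla u|^2/u^2\bigr)\ge 0$ for $\bar g=u^2 g_\Sigma$), but the step you defer --- completeness of $\bar g$ --- is not a technical detail that a ``Harnack-type lower bound'' can supply; it is essentially equivalent to the conclusion you are trying to prove. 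Harnack inequalities only compare values of $u$ on compact sets and give no lower bound along a divergent curve, and positive Jacobi functions can in principle decay to zero along the ends. Worse, if $\Sigma$ had hyperbolic conformal type (the very case you must exclude), then $\bar g$ could \emph{not} be complete, since a complete metric with $\bar K\ge 0$ is automatically parabolic; so any proof must either handle the incomplete case directly (this is what Fischer-Colbrie--Schoen actually do, via a global argument along a finite-$\bar g$-length divergent path, not via a pointwise lower bound on $u$) or avoid the conformal metric altogether (log-cutoff/Castillon-type test-function arguments for $-\Delta+K\ge 0$). As written, your plan is circular at its central step.

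A second, related gap concerns the rigidity in the cylinder case. Your Huber/Cohn-Vossen argument is again conditional on the unproven completeness of $\bar g$, and historically this is exactly the delicate point: Fischer-Colbrie--Schoen obtained ``totally geodesic, flat, $R_g\equiv 0$ along $\Sigma$'' for the cylinder only under an additional finite absolute total curvature hypothesis, and removing that hypothesis is why the paper also cites Carlotto--Chodosh--Eichmair. So even granting the conformal classification, the final paragraph of your proposal does not yet establish the rigidity statement; it needs either the completeness/parabolicity input made rigorous or a separate argument (e.g.\ a cutoff argument exploiting the positive Jacobi function) in place of the appeal to Cohn-Vossen. Minor point: with only ``two-sided'' as a hypothesis you should also say a word ruling out the Möbius band, which has $\chi=0$ and survives your Euler characteristic count; in the paper's setting this follows from orientability of $M$ and two-sidedness of $\Sigma$.
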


The following rigidity result is due to M. Cai and G. Galloway\cite[Theorem 2]{caigalloway}.
\begin{lemma}
    Let $(M,g)$ be a complete connected 3-manifold of nonnegative scalar curvature whose boundary (possibly empty) is mean convex. If $\Sigma$ is a two-sided torus which has the least area in its isotopy class, then $(M,g)$ is flat.
\end{lemma}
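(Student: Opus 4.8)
The plan is to reproduce the argument of Cai--Galloway in three steps: pointwise rigidity along $\Sigma$, a local splitting of a tubular neighbourhood, and a global continuation. For the first step, note that a torus of least area in its isotopy class is, in particular, a two-sided stable minimal surface, so the first lemma of this section applies. Quantitatively, testing the stability (second variation) inequality with $f\equiv 1$, using the contracted Gauss equation $2\Ric(\nu,\nu)=R_g-R_\Sigma-|A|^2$ (here $H\equiv 0$), and invoking Gauss--Bonnet $\int_\Sigma K_\Sigma=2\pi\chi(\Sigma)=0$ for the torus, one obtains
\[
0\ \ge\ \int_\Sigma\big(|A|^2+\Ric(\nu,\nu)\big)\ =\ \int_\Sigma\Big(\tfrac12|A|^2+\tfrac12 R_g\Big)\ \ge\ 0 .
\]
Hence $A\equiv 0$ and $R_g\equiv 0$ along $\Sigma$; and since equality makes $f\equiv 1$ a lowest eigenfunction of the Jacobi operator, $|A|^2+\Ric(\nu,\nu)\equiv 0$ pointwise, so $\Ric(\nu,\nu)\equiv 0$ and $\Sigma$ is flat and totally geodesic, with the Jacobi operator reduced to $-\Delta_\Sigma$.

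For the local splitting, let $\Phi(x,t)=\exp_x(t\nu(x))$; for $\eps$ small it is a diffeomorphism onto a tubular neighbourhood $U$. Because $0$ is the lowest Jacobi eigenvalue with positive (constant) eigenfunction, the standard implicit-function argument modulo the one-dimensional kernel produces a foliation $\{\Sigma_t\}_{|t|<\eps}$ of $U$ by tori graphical over $\Sigma$, of \emph{constant} mean curvature $H(t)$ with $H(0)=0$, moving with positive lapse $\rho_t$ ($\rho_0\equiv1$). Each $\Sigma_t$ is isotopic to $\Sigma$, hence $\Area(\Sigma_t)\ge\Area(\Sigma)$. Writing the evolution equation for $H$, dividing by $\rho_t$, integrating over $\Sigma_t$, and using the Gauss equation together with $\int_{\Sigma_t}K_{\Sigma_t}=0$ and $R_g\ge 0$ gives
\[
H'(t)\int_{\Sigma_t}\rho_t^{-1}\,\darea\ =\ -\int_{\Sigma_t}\frac{|\nabla\rho_t|^2}{\rho_t^{2}}\,\darea\ -\ \tfrac12\int_{\Sigma_t}\big(|A_t|^2+R_g+H(t)^2\big)\,\darea\ \le\ 0,
\]
so $H'\le 0$ and $tH(t)\le 0$ on $(-\eps,\eps)$. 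The first variation of area gives $A'(t)=H(t)\int_{\Sigma_t}\rho_t\,\darea$ (up to a fixed sign), whence $tA'(t)\le 0$ and $\Area(\Sigma_t)\le\Area(\Sigma)$ for all $t$; combined with $\Area(\Sigma_t)\ge\Area(\Sigma)$ this forces $\Area(\Sigma_t)\equiv\Area(\Sigma)$, hence $H\equiv 0$, and then equality in the displayed identity yields $|A_t|\equiv 0$, $R_g\equiv 0$, and $\rho_t$ constant on each leaf. Reparametrising $t$ to absorb $\rho$, the metric on $U$ is $dt^2+h$ with $h$ the fixed flat metric on $\Sigma$; thus $U$ is a flat Riemannian product, and every leaf $\Sigma_t$ is again a flat, totally geodesic torus of least area in its isotopy class.

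For the global step, let $\mathcal{F}\subset M$ be the union of all open subsets isometric to a flat product $\Sigma\times(a,b)$ whose slices are least-area tori in the isotopy class of $\Sigma$. The previous step gives $\mathcal F\neq\emptyset$ and $\mathcal F$ open. Maximally extend such a foliation $\{\Sigma_t\}_{t\in I}$: since distinct leaves sit at ambient distance equal to their parameter distance, leaves with $t_i$ approaching a finite endpoint of $I$ form a Cauchy family and, by completeness of $M$, converge to a flat totally geodesic torus $\Sigma_{t_*}$ of the same area, again least-area in its isotopy class. If $\Sigma_{t_*}$ lies in the interior of $M$ we continue past $t_*$, contradicting maximality; so $\Sigma_{t_*}\subset\partial M$, and there the foliated region already contains a half-ball neighbourhood of each of its points. (If instead a leaf met the mean-convex boundary prematurely, the strong maximum principle would force that boundary component to coincide with the leaf, i.e.\ to be a flat totally geodesic torus — again only at the "ends" of the foliation.) In all cases $\overline{\mathcal F}$ is open, closed and nonempty, so $\mathcal F$ exhausts $M$ up to a flat boundary torus, and $(M,g)$ is flat.

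The main obstacle is the local splitting step: setting up the constant-mean-curvature foliation correctly in the presence of the Jacobi kernel, and then squeezing from the single scalar inequality $H'\le 0$, against the area-minimizing hypothesis, the full pointwise rigidity (each leaf totally geodesic, $R_g\equiv 0$, leafwise-constant lapse) on an honest neighbourhood — this is where nonnegativity of scalar curvature, the torus Gauss--Bonnet identity, and minimality all have to conspire. The global continuation is then routine once one checks the two points flagged above, namely that the flat tori do not collapse along the foliation (controlled by the product structure) and that their possible limits on $\partial M$ are handled by the maximum principle.
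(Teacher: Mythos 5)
The paper does not actually prove this lemma; it is quoted from Cai--Galloway (their Theorem 2), and your argument is essentially that standard proof, which also parallels the paper's own foliation argument for Lemma \ref{lemma:flatAnnulus} in the capillary setting. Your three steps — infinitesimal rigidity from stability with the constant test function plus Gauss--Bonnet on the torus, the CMC foliation with $H'(t)\le 0$ squeezed against the least-area property of the isotopy class to force $H\equiv 0$, totally geodesic flat leaves and constant lapse, and the open--closed continuation with leaves meeting the mean-convex boundary handled by the strong maximum principle — are correct as written (your sign hedge in the first-variation formula is harmless once the same convention is used in both the Jacobi linearization and the first variation). The only place still requiring genuine care is the continuation step: one must justify smooth subconvergence of the leaves to an embedded limit torus that is again least-area in the isotopy class, and deal with the possibility that the foliation closes up on itself (a flat mapping-torus situation); these points are standard and are handled in Cai--Galloway, so they do not constitute a gap in the approach.
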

The rigidity results regarding stable sphere were studied by H. Bray, S. Brendle and A. Neves\cite{braybrendleneves}. See also related rigidity works by I. Nunes \cite{nunesrigidity} and M. Micallef and V. Moraru \cite{micallefmoraru}.

\subsection{Surfaces with boundary}\label{capillarysurfaces}

Let $(M,g)$ be a smooth orientable Riemannian 3-manifold with nonempty boundary. Let $\Sigma$ be a smooth surface with nonempty boundary and let $\varphi:\Sigma\rightarrow M$ be a two-sided proper embedding such that $\varphi(\Sigma)\cap \partial M=\varphi(\partial\Sigma).$ Let us just denote the image by $\Sigma$ for simplicity. Fix a unit normal vector $N$ along $\Sigma$, and denote $\nu$ the unit conormal vector of $\partial\Sigma$ in $\Sigma$. Denote $\bar{N}$ the outward unit normal vector of $\partial M$ in $M$, and choose $\bar{\nu}$ to be the unit normal vector of $\partial \Sigma$ in $\partial M$ such that $\{\bar{N},\bar{\nu}\}$ and $\{N,\nu\}$ determines the same orientation in $(T\partial\Sigma)^\perp.$

Let $X(\cdot,t):\Sigma\times (-\epsilon,\epsilon)\rightarrow M$ be a compactly supported variation such that $X(\partial \Sigma,t)\subset \partial M$ for all $t \in (-\epsilon, \epsilon)$. Let $\Omega \subset \Sigma$ be the support of the variation $X(\cdot, t)$. With respect to this variation $X$, we consider following two functionals:
\begin{align*}
	A(t)={}&\int_\Omega\ dA_t,\\
	W(t)={}&\int_{[0,t]\times(\partial\Sigma \cap \Omega)}X^{*}\ dA_{\partial M},
\end{align*}
where $dA_t$ is the area element with respect to the metric on $X(\Sigma,t)$, $d A_{\partial M}$ is the area element of $\partial M$, and
$X^*(dA_{\partial M})$ is the pullback area element. $W(t)$ is usually called wetting area functional and geometrically means the signed area of the domain bounded by $\partial\Sigma\cap \Omega$ and $\partial\Sigma_t\cap\Omega.$ 

Fix a number $\theta\in(0,\pi)$, and consider the $\theta$-energy functional
\[E_\theta(t)=A(\Sigma_t)-\cos\theta W(t).\]
We call a surface $\theta$-capillary minimal surface if it is a critical point of the $\theta$-energy functional for any compactly supported variation.  Notice that a $\theta$-capillary minimal surface intersects $\partial M$ at a constant angle $\theta$, i.e., $g(\nu,\bar{\nu})=\theta$. In particular, if $\theta=\pi/2$, the energy functional reduces to the usual area functional, and the critical point corresponds to a free boundary minimal surface. By calculating the second derivative of $E_\theta$ with respect to $t$ and evaluating at $t=0$, we have
\[E''(0)=\int_\Sigma|\nabla\varphi|^2-(\operatorname{Ric}_M(N,N)+|A_\Sigma|^2)\varphi^2-\int_{\partial\Sigma}q\varphi^2\]
where 
$q=\frac{1}{\sin\theta}h_{\partial M}(\bar{\nu},\bar{\nu})+\cot\theta h_{\Sigma}(\nu,\nu).$ Here $h_{\partial M}$ and $h_\Sigma$ are the second fundamental forms of $\partial M$ in $M$ and $\Sigma$ in $M$ with respect to $\bar{N}$ and $N$, respectively.

The capillary minimal surface $\Sigma$ is called stable if $E''(0)\geq 0$ for any compactly supported function $\varphi$. In Section \ref{sec:energy_minimizing_surfaces}, we provide various definitions of $\theta$-energy-minimizing of a surface, and they all imply that the surface is a critical point and a local minimizer of $E_\theta$ with respect to any compact variation $X$, thus is a stable capillary minimal surface.

Assume that $\Sigma$ is a compact surface. The following result is due to E. Longa \cite[Theorem B]{longacapillary}. The $\theta=\pi/2$ case was proved by L. Ambrozio \cite[Theorem 7]{ambroziorigidity}.
\begin{lemma}\label{ambrozio}
    Let $(M,g)$ be a Riemannian 3-manifold with mean-convex boundary. Suppose $\Sigma$ is a compact, properly embedded, two-sided, $\theta$-energy minimizing surface such that $I_\theta(\Sigma)=2\pi\chi(\Sigma)$. Assume that 
     \vskip.1cm
     \begin{enumerate}[label=\textnormal{(\theenumi)}]
	 	\item each component of $\partial\Sigma$ is locally length-minimizing in $\partial M$; or
\item $\inf H_{\partial M}=0$.
    \end{enumerate}
    % (1) each component of $\partial\Sigma$ is locally length-minimizing in $\partial M$; or
    % (2) $\inf H_{\partial M}=0$.
    \vskip.1cm
    Then either $\theta=\pi/2$ or $(M,g)$ is flat and totally geodesic cylinder, $M$ is flat and $\partial M$ is totally geodesic around $\Sigma.$ In the first case, there is a neighborhood $\Sigma$ in $M$ that is isometric to $(\Sigma,(-\epsilon,\epsilon), g_\Sigma+dt^2)$ where $(\Sigma, g_\Sigma)$ has constant Gaussian curvature $\frac{1}{2}\inf R$ and $\partial\Sigma$ has constant geodesic curvature $\inf H_{\partial M}$ in $\Sigma.$
    
\end{lemma}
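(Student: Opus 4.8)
The lemma is a rigidity statement in the spirit of Bray--Brendle--Neves and Ambrozio, and my plan has three stages: (i) convert the equality $I_\theta(\Sigma)=2\pi\chi(\Sigma)$ into pointwise information along $\Sigma$ and $\partial\Sigma$; (ii) spread that information over a neighbourhood of $\Sigma$ by means of a capillary foliation; (iii) assemble the product, respectively flat, model and read off the dichotomy. The scalar--curvature infimum $\inf_M R$ and the mean--curvature infimum $\inf H_{\partial M}$ that appear in the conclusion are treated as finite quantities built into the setup, and are used only in stages (i)--(ii).

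For (i): recall that, by its very definition, the inequality $I_\theta(\Sigma)\le 2\pi\chi(\Sigma)$ is exactly what one obtains by inserting the test function $\varphi\equiv 1$ into $E_\theta''(0)\ge 0$, substituting the Gauss equation of the minimal surface $\Sigma$ for $\operatorname{Ric}_M(N,N)+|A_\Sigma|^2$, applying Gauss--Bonnet $\int_\Sigma K_\Sigma+\int_{\partial\Sigma}\kappa_g=2\pi\chi(\Sigma)$, and decomposing the geodesic curvature $\kappa_g$ of $\partial\Sigma$ in $\Sigma$ --- using constancy of the contact angle $\theta$ --- into the geodesic curvature of $\partial\Sigma$ in $\partial M$ plus a term built from $h_{\partial M}$. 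Hypothesis (1) makes the $\partial M$-geodesic-curvature contribution drop out, since a locally length-minimizing curve is a stable geodesic of $\partial M$; hypothesis (2) instead lets one bound the remaining boundary integrand from below using $H_{\partial M}\ge\inf H_{\partial M}=0$ together with mean-convexity. Equality therefore forces, along $\Sigma$: $A_\Sigma\equiv 0$ (so $\Sigma$ is totally geodesic), $R_M\equiv\inf_M R$ and $\operatorname{Ric}_M(N,N)\equiv 0$; and $\varphi\equiv 1$ lies in the kernel of the Jacobi operator with its Robin boundary condition, whose boundary equation $\partial_\nu\varphi=q\varphi$ then gives $q\equiv 0$, hence $h_{\partial M}(\bar\nu,\bar\nu)\equiv 0$ on $\partial\Sigma$. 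If moreover $\theta\ne\pi/2$, the $\cot\theta$-part of $q$ together with the already-vanishing boundary geodesic-curvature term forces in addition $\kappa_g^{\partial M}\equiv 0$ and in fact $h_{\partial M}\equiv 0$ along $\partial\Sigma$.

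For (ii): since $\varphi\equiv 1>0$ is a Jacobi field for the Robin problem, the implicit function theorem in H\"older spaces on $\Sigma$ --- with smoothness up to the edge $\partial\Sigma$ guaranteed by $\theta\in(0,\pi)$ --- produces a smooth family $\{\Sigma_t\}_{|t|<\eps}$ of properly embedded capillary minimal surfaces with contact angle $\theta$ foliating a neighbourhood $U$ of $\Sigma_0=\Sigma$, with positive lapse $\phi_t$ and $\phi_0\equiv 1$. Each leaf is a critical point of $E_\theta$ for the admissible variation joining $\Sigma_t$ to $\Sigma_{t+s}$, so $t\mapsto E_\theta(\Sigma_t)$ is constant, and together with the $\theta$-minimality of $\Sigma$ this makes every $\Sigma_t$ a $\theta$-energy minimizer in its own neighbourhood. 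Running the standard propagation argument, the positive lapse $\phi_t$ is a Jacobi field on each leaf, and the ambient scalar-curvature bound (via the Gauss equation and Gauss--Bonnet on the leaf, together with $\log\phi_t$ being subharmonic when $A_t$ and the curvature deficit do not vanish) forces $\phi_t$ to be constant and $A_t\equiv 0$ on every leaf; hence the conclusions of (i) hold throughout $U$: each leaf is totally geodesic, $R_M\equiv\inf_M R$ on $U$, and the rigid boundary relations hold along every $\partial\Sigma_t$.

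For (iii): totally geodesic leaves with constant lapse make $U$ isometric, after rescaling $t$, to $(\Sigma\times(-\eps,\eps),\,g_\Sigma+dt^2)$; the Gauss equation with $A_\Sigma=0$ then gives $K_{g_\Sigma}\equiv\tfrac12\inf_M R$, while $q\equiv 0$ together with $h_{\partial M}(\bar\nu,\bar\nu)=0$ gives that $\partial\Sigma$ has constant geodesic curvature $\inf H_{\partial M}$ in the leaf --- this is the $\theta=\pi/2$ alternative. If $\theta\ne\pi/2$, then in addition $\partial M$ is totally geodesic along $\Sigma$, hence (propagating as above) throughout $U$, and two totally geodesic surfaces meeting at a constant angle $\ne\pi/2$ in a $3$-manifold force the ambient curvature to vanish; thus $U$ --- and therefore $(M,g)$ --- is flat and is a totally geodesic cylinder over $\Sigma$ with $\partial M$ totally geodesic, which is the asserted dichotomy. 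The main obstacle is step (ii): securing the capillary foliation with enough regularity along the edge $\partial\Sigma$, and, more importantly, propagating the equality case to the leaves --- since the wetting term $\cos\theta\,W$ in $E_\theta$ is sign-indefinite, one cannot argue by plain area comparison, and the argument must instead run through the energy identity $E_\theta(\Sigma_t)\equiv E_\theta(\Sigma)$ and the vanishing of the second variation on the positive lapse.
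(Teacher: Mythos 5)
Note first that the paper does not prove this lemma at all --- it is quoted from Longa (Theorem B), with the $\theta=\pi/2$ case due to Ambrozio --- so your proposal has to stand on its own, and as written it has genuine gaps exactly at the points you yourself flag as delicate. The main one is stage (ii): the implicit function theorem cannot produce a foliation by capillary \emph{minimal} leaves. In the equality case the linearization of the minimal-capillary problem is $\Delta$ with the homogeneous Robin/Neumann condition (since $\mathrm{Ric}(N,N)+|A|^2\equiv 0$ and $q\equiv 0$), whose kernel contains the constants --- this is precisely the infinitesimal rigidity you derived in stage (i) --- so the operator is not invertible and minimality cannot be prescribed. The known proofs (Ambrozio, Longa, and the paper's own sketch in the proof of Lemma \ref{lemma:flatAnnulus}) instead construct a foliation by capillary \emph{CMC} leaves with contact angle $\theta$ and then must show $H(t)\equiv 0$ by playing the $\theta$-energy-minimizing property against a first-variation/maximum-principle analysis of $H(t)$; that is the real content of the rigidity step, and it is also where mean-convexity and hypotheses (1)/(2) are genuinely used. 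Your chain ``leaves are minimal $\Rightarrow E_\theta(\Sigma_t)$ is constant $\Rightarrow$ every leaf minimizes'' assumes the conclusion of this hardest step. Relatedly, (1)/(2) are not needed to obtain $I_\theta(\Sigma)\le 2\pi\chi(\Sigma)$ (that inequality holds for every compact stable capillary minimal surface, as the paper notes); for a minimal capillary surface one has the pointwise identity $k_g+q=\frac{1}{\sin\theta}H_{\partial M}$ along $\partial\Sigma$, with no geodesic-curvature-in-$\partial M$ term to ``drop''. The hypotheses enter in the equality analysis (e.g.\ after $A_\Sigma\equiv 0$ the geodesic curvature of $\partial\Sigma$ in $\partial M$ equals $\cos\theta\,k_g$, and (1) or (2) is what forces the relevant boundary quantities to vanish when $\theta\ne\pi/2$); also, equality only yields $h_{\partial M}(\bar\nu,\bar\nu)=0$ and $H_{\partial M}=\inf H_{\partial M}$ along $\partial\Sigma$, not $h_{\partial M}\equiv 0$ as you claim.

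Stage (iii) is also flawed in the $\theta\ne\pi/2$ branch. A neighbourhood of $\Sigma$ is \emph{not} a metric product $(\Sigma\times(-\varepsilon,\varepsilon),g_\Sigma+dt^2)$ when $\theta\ne\pi/2$: the correct local model is the wedge of Figure \ref{fig:wedge}, where the leaves change size linearly in $t$ (cf.\ $l_2(t)=l_2+2t\cos\theta$ in the proof of Lemma \ref{lemma:flatAnnulus}); the product splitting is precisely the $\theta=\pi/2$ alternative. Moreover, the statement you invoke to get flatness --- that two totally geodesic surfaces meeting at a constant angle $\ne\pi/2$ force the ambient curvature to vanish --- is false: in $\mathbb{H}^2\times\mathbb{R}$ two vertical totally geodesic planes meet along a vertical geodesic at any prescribed constant angle, and the ambient space is not flat. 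Flatness in the $\theta\ne\pi/2$ case has to be extracted from the foliation itself (flat leaves with parallel unit normal and vanishing normal Ricci curvature), as in Longa's argument and the paper's proof of Lemma \ref{lemma:flatAnnulus}. So your overall skeleton (stability with $\varphi\equiv 1$ plus Gauss--Bonnet, then a foliation, then a splitting) is the standard one, but the two key mechanisms you propose --- IFT-produced minimal leaves and the totally-geodesic-intersection flatness claim --- do not work and must be replaced by the CMC-foliation argument and its equality analysis.
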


Here $I_\theta(\Sigma)=\frac{1}{2}\inf R |\Sigma|+\frac{1}{\sin\theta}\inf H_{\partial M}|\partial\Sigma|$, and $\theta$-energy minimizing means that for all variation $\Sigma_t$ of $\Sigma$, we have $E_\theta(t)\ge E_\theta(0)$.
It is easy to see that $I_\theta(\Sigma)\leq 2\pi\chi(\Sigma)$ is always true for compact stable $\theta$-capillary minimal surfaces (see \cite[Theorem A]{longacapillary}). Thus, if the ambient scalar curvature is nonnegative and the boundary is mean convex, then $\chi(\Sigma)\geq 0$, that is, $\Sigma$ is topologically a disc or an annulus. Properly embeddedness means that the interior of $\Sigma$ does not intersect $\partial M.$  The a priori assumptions (1) and (2) are important in the proof. In fact, the topology of annulus guarantees $(2).$
\begin{lemma}
\label{lemma:flatAnnulus}
    Let $(M,g)$ be a Riemannian 3-manifold with mean-convex boundary and nonnegative scalar curvature. If $\Sigma$ is a compact, properly embedded, two-sided, $\theta$-energy minimizing annulus. Then $(M,g)$ is  flat and $\partial M$ is totally geodesic around $\partial\Sigma$. In particular, if $\theta=\pi/2$, $(M,g)$ is isometric to $((-\epsilon,\epsilon)\times \Sigma, dt^2+g_{\Sigma})$ where $(\Sigma,g_\Sigma)$ is a flat annulus and $\partial\Sigma$ is geodesic in $\Sigma$.  If $\theta\neq\pi/2 $, $(M,g)$ is locally isometric to a wedge domain shown in Figure \ref{fig:wedge}.
\end{lemma}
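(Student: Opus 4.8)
Here is the plan. The statement is, up to bookkeeping, the annulus case of \Cref{ambrozio} (Longa's rigidity theorem), and the only extra ingredient is the annulus topology, which will force the equality case of Longa's inequality while simultaneously verifying the a priori hypothesis~(2) of that theorem.

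First I would check that \Cref{ambrozio} applies. Since $\Sigma$ is an annulus, $\chi(\Sigma)=0$. A $\theta$-energy-minimizing surface is in particular a stable two-sided $\theta$-capillary minimal surface (as recalled in Section~\ref{capillarysurfaces} and Section~\ref{sec:energy_minimizing_surfaces}), so the inequality $I_\theta(\Sigma)\le 2\pi\chi(\Sigma)$ valid for compact stable $\theta$-capillary minimal surfaces \cite[Theorem A]{longacapillary} gives $I_\theta(\Sigma)\le 0$. On the other hand $R_g\ge 0$ and $H_{\partial M}\ge 0$, and $\sin\theta>0$ because $\theta\in(0,\pi)$, so $I_\theta(\Sigma)=\tfrac{1}{2}(\inf R_g)|\Sigma|+\tfrac{1}{\sin\theta}(\inf H_{\partial M})|\partial\Sigma|\ge 0$. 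Hence $I_\theta(\Sigma)=0=2\pi\chi(\Sigma)$, and, since $|\Sigma|>0$ and $|\partial\Sigma|>0$, both nonnegative summands must vanish; in particular $\inf H_{\partial M}=0$ (attained, as $\partial\Sigma$ is compact), which is exactly hypothesis~(2) of \Cref{ambrozio}. Thus all hypotheses of \Cref{ambrozio} hold for $\Sigma$.

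Then I would invoke \Cref{ambrozio} and read off the two models. It returns the dichotomy: either $\theta=\pi/2$, or $(M,g)$ is flat, $\Sigma$ is a totally geodesic flat cylinder, and $\partial M$ is totally geodesic around $\partial\Sigma$. In the case $\theta\ne\pi/2$ the flatness and totally-geodesic assertions are already the claim; to pin down the local model one uses that $\Sigma$ and $\partial M$ are totally geodesic in the flat manifold $M$ and meet along $\partial\Sigma$ at the constant contact angle $\theta$ (a defining property of $\theta$-capillary surfaces), so in flat coordinates around a point of $\partial\Sigma$ they develop into two half-planes with a common boundary line and dihedral angle $\theta$, with $M$ the flat wedge they bound --- the domain of Figure~\ref{fig:wedge}. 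In the case $\theta=\pi/2$, \Cref{ambrozio} gives a neighborhood of $\Sigma$ in $M$ isometric to the Riemannian product $(\Sigma\times(-\epsilon,\epsilon),\,g_\Sigma+dt^2)$ with $(\Sigma,g_\Sigma)$ of constant Gaussian curvature $\tfrac{1}{2}\inf R_g=0$, hence flat, and $\partial\Sigma$ of constant geodesic curvature $\inf H_{\partial M}=0$, hence a geodesic of $(\Sigma,g_\Sigma)$; in such a product $\partial M=\partial\Sigma\times(-\epsilon,\epsilon)$ is automatically totally geodesic. Either way this is the asserted conclusion.

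I do not expect a serious obstacle: the lemma is a specialization of \Cref{ambrozio}, and the real content is only (i) observing that the annulus hypothesis triggers both the equality $I_\theta(\Sigma)=2\pi\chi(\Sigma)$ and the a priori condition~(2), and (ii) unwinding the output of \Cref{ambrozio} into the two explicit flat models. The mildest point of care is the Euclidean identification in the $\theta\ne\pi/2$ case --- that ``flat, with $\Sigma$ and $\partial M$ totally geodesic and meeting at constant angle $\theta$'' determines a neighborhood of $\partial\Sigma$ up to isometry as the flat wedge of opening $\theta$ --- which is elementary. One should also record that whichever of the minimizing notions of Section~\ref{sec:energy_minimizing_surfaces} is intended by a ``$\theta$-energy-minimizing annulus,'' it yields $E_\theta(t)\ge E_\theta(0)$ for all compactly supported variations, which is all that \Cref{ambrozio} requires of a compact surface.
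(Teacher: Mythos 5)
Your reduction is correct, and in fact the paper itself anticipates its first half: the remark after \Cref{ambrozio} notes that stability gives $I_\theta(\Sigma)\le 2\pi\chi(\Sigma)$ (Longa's Theorem A) and that the annulus topology forces hypothesis (2), which is exactly your verification that $I_\theta(\Sigma)=0=2\pi\chi(\Sigma)$ and $\inf H_{\partial M}=0$ (the parenthetical ``attained, as $\partial\Sigma$ is compact'' is unnecessary and not quite justified, but attainment is not needed). Where you diverge is the second half: you invoke \Cref{ambrozio} as a black box and then recover the local models by a development argument from ``flat $+$ totally geodesic $+$ constant angle $\theta$,'' whereas the paper reproves the rigidity directly --- stability with $\varphi=1$ plus Gauss--Bonnet forces equality (giving $H_{\partial M}=0$ pointwise along $\partial\Sigma$), then the implicit function theorem produces a foliation by $\theta$-capillary CMC annuli (Longa's Proposition 3.2), each leaf flat with parallel normal, and the first-variation computation yields $l_1(t)=l_1$, $l_2(t)=l_2+2t\cos\theta$. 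The paper's route buys the explicit foliation and width formula, which is precisely what the continuation argument in the proof of the main theorems later exploits, and it makes the wedge ($\theta\neq\pi/2$) and product ($\theta=\pi/2$) pictures immediate; your route is shorter but leaves the Euclidean identification schematic, and your description of the wedge is slightly off: the wedge neighborhood of $\Sigma$ is the region swept by the leaves, bounded laterally by the two totally geodesic sheets of $\partial M$ near the two boundary circles (the leaf width varying at rate $2\cos\theta$), with $\Sigma$ an interior leaf --- it is not the region bounded by $\Sigma$ and $\partial M$, since $M$ continues on both sides of $\Sigma$. This is a presentational inaccuracy rather than a mathematical gap; with that fixed, your argument stands as a valid, more citation-based alternative to the paper's proof.
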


\begin{figure}[ht]
    \centering
	\begingroup
	%\fontsize{7pt}{12pt}
	\def\svgwidth{0.6\columnwidth}
	%% Creator: Inkscape 1.3 (0e150ed, 2023-07-21), www.inkscape.org
%% PDF/EPS/PS + LaTeX output extension by Johan Engelen, 2010
%% Accompanies image file 'wedge.pdf' (pdf, eps, ps)
%%
%% To include the image in your LaTeX document, write
%%   \input{<filename>.pdf_tex}
%%  instead of
%%   \includegraphics{<filename>.pdf}
%% To scale the image, write
%%   \def\svgwidth{<desired width>}
%%   \input{<filename>.pdf_tex}
%%  instead of
%%   \includegraphics[width=<desired width>]{<filename>.pdf}
%%
%% Images with a different path to the parent latex file can
%% be accessed with the `import' package (which may need to be
%% installed) using
%%   \usepackage{import}
%% in the preamble, and then including the image with
%%   \import{<path to file>}{<filename>.pdf_tex}
%% Alternatively, one can specify
%%   \graphicspath{{<path to file>/}}
%% 
%% For more information, please see info/svg-inkscape on CTAN:
%%   http://tug.ctan.org/tex-archive/info/svg-inkscape
%%
\begingroup%
  \makeatletter%
  \providecommand\color[2][]{%
    \errmessage{(Inkscape) Color is used for the text in Inkscape, but the package 'color.sty' is not loaded}%
    \renewcommand\color[2][]{}%
  }%
  \providecommand\transparent[1]{%
    \errmessage{(Inkscape) Transparency is used (non-zero) for the text in Inkscape, but the package 'transparent.sty' is not loaded}%
    \renewcommand\transparent[1]{}%
  }%
  \providecommand\rotatebox[2]{#2}%
  \newcommand*\fsize{\dimexpr\f@size pt\relax}%
  \newcommand*\lineheight[1]{\fontsize{\fsize}{#1\fsize}\selectfont}%
  \ifx\svgwidth\undefined%
    \setlength{\unitlength}{383.59430749bp}%
    \ifx\svgscale\undefined%
      \relax%
    \else%
      \setlength{\unitlength}{\unitlength * \real{\svgscale}}%
    \fi%
  \else%
    \setlength{\unitlength}{\svgwidth}%
  \fi%
  \global\let\svgwidth\undefined%
  \global\let\svgscale\undefined%
  \makeatother%
  \begin{picture}(1,0.55694468)%
    \lineheight{1}%
    \setlength\tabcolsep{0pt}%
    \put(0,0){\includegraphics[width=\unitlength,page=1]{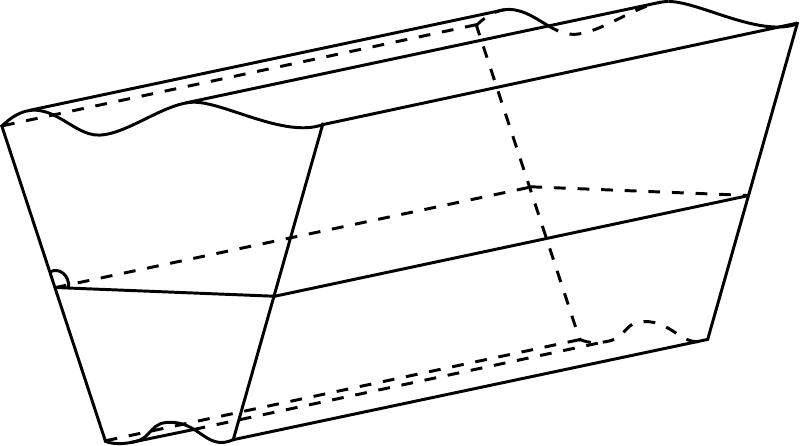}}%
    \put(0.07684909,0.22901359){\makebox(0,0)[lt]{\lineheight{1.25}\smash{\begin{tabular}[t]{l}$\theta$\end{tabular}}}}%
    \put(0.13120851,0.51077871){\makebox(0,0)[lt]{\lineheight{1.25}\smash{\begin{tabular}[t]{l}$M$\end{tabular}}}}%
    \put(0.17814305,0.14082998){\makebox(0,0)[lt]{\lineheight{1.25}\smash{\begin{tabular}[t]{l}$\Sigma$\end{tabular}}}}%
    \put(-0.01508581,0.10693678){\makebox(0,0)[lt]{\lineheight{1.25}\smash{\begin{tabular}[t]{l}$\partial M$\end{tabular}}}}%
  \end{picture}%
\endgroup%

	\endgroup

	\caption{A locally wedge domain}
    \label{fig:wedge}
\end{figure}

For completeness and convenience of readers, we include the proof briefly here.

\begin{proof}[Proof of Lemma \ref{lemma:flatAnnulus}]
Plugging $\varphi=1$ in the stability inequality and using Gauss equation show that
\[0\leq \int_\Sigma -\frac{1}{2}R+K_\Sigma -\frac{1}{2}|A|^2+\int_{\partial\Sigma}k_{\partial\Sigma}-\frac{1}{\sin\theta}H_{\partial M}.\]
The assumption and Gauss-Bonnet formula imply that
\[0\leq 2\pi(2-2\gamma-k).\]
When the surface is annulus, i.e., $\gamma=0,\ k=2$, we achieve all the equalities in the deduction. In particular, $H_{\partial M}=0$ along $\partial \Sigma.$

Using the implicit function theorem, one can construct a foliation of $\theta$-capillary CMC annulus around $\Sigma$ (See \cite[Proposition 3.2]{longacapillary}). Following the notations from \cite{longacapillary}, we denote the parametrization of the foliation $\{\Sigma_t\}_{t\in(-\epsilon,\epsilon)}$ by $G$ and denote the unit normal vector of each leaf by $N_t$. It is shown in \cite[Pages 17-18]{longacapillary} that $\Sigma_t$ is flat annulus and $N_t$ is parallel vector field. Since each leaf $\Sigma_t$ is a flat annulus, up to a dilation of ambient metric, we assume that $\Sigma=\mathbb{S}^1(l_1)\times [0,l_2]$ and $\Sigma_t=\mathbb{S}^1(l_1(t))\times [0,l_2(t)]$. Because $N_t$ is parallel, $\Sigma_s$ contains a domain that is isometric to $\Sigma_t$ for $s>t$. Thus we can assume that $l_1(t)=l_1$ for $t\in (-\epsilon,\epsilon).$ Notice that $|\Sigma_t|=l_1\cdot l_2(t)$. On the other hand,
\[\frac{d|\Sigma_t|}{dt}=\int_{\Sigma_t}H_tf_t +\int_{\partial\Sigma_t}g(\frac{\partial G}{\partial t},\nu_t)=\cos\theta|\partial\Sigma_t|\]
where $\nu_t$ is the conormal of $\partial\Sigma_t$ in $\Sigma_t$ and $f_t=g(\frac{\partial G}{\partial t},N_t).$ Then
\[\frac{d l_2(t)}{dt}=2\cos\theta.\]
Thus $l_2(t)=l_2+2t\cos\theta.$ In particular, when $\theta=\pi/2$, all sheets are isometric to each other and the normal vector field of $\Sigma_t$ is a local vector field on $M$ that induces a flow by isometries and provides a local splitting. This completes the proof.
\end{proof}

When the surface $\Sigma$ under consideration is not compact, we have the following result due to the first-named author and A. Saturnino \cite[Corollary 4.5]{hongcrelle}. The flatness and the geodesic property of the boundary are not asserted in Corollary 4.5 of \cite{hongcrelle}, but it can be deduced by following the proof there.

\begin{lemma}
\label{lemma:flatnessBdry}
    Let $(M,g)$ be a Riemannian 3-manifold with mean-convex boundary and nonnegative scalar curvature. Let $\Sigma$ be a complete noncompact, properly immersed, two-sided stable capillary minimal surface in $M$. Then $\Sigma$ is conformally diffeomorphic to a strip, or a half-cylinder, or a half-plane. In the former two cases, the surface is totally geodesic and flat. Moreover, the ambient scalar curvature vanishes along $\Sigma$, $H_{\partial M}$ vanishes along $\partial \Sigma$ and $\partial\Sigma$ is geodesic in $\Sigma.$  
\end{lemma}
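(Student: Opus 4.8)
The plan is to run the Fischer--Colbrie--Schoen argument \cite{Fischer-Colbrie-Schoen-The-structure-of-complete-stable} in the presence of a capillary boundary, which is precisely what is carried out in \cite[Corollary 4.5]{hongcrelle}. I would begin with the stability inequality for $E_\theta$: for every $\varphi\in C_c^\infty(\Sigma)$,
\begin{equation}\label{eq:capStability}
  \int_\Sigma|\nabla\varphi|^2 \;\ge\; \int_\Sigma\bigl(\Ric_M(N,N)+|A_\Sigma|^2\bigr)\varphi^2+\int_{\partial\Sigma}q\,\varphi^2 .
\end{equation}
Substituting the Gauss equation $\Ric_M(N,N)+|A_\Sigma|^2=\tfrac12 R_g-K_\Sigma+\tfrac12|A_\Sigma|^2$ and the capillary identity $q=\tfrac1{\sin\theta}H_{\partial M}-k_{\partial\Sigma}$ (with $k_{\partial\Sigma}$ the geodesic curvature of $\partial\Sigma$ in $\Sigma$) used in the proof of Lemma~\ref{lemma:flatAnnulus}, and discarding the nonnegative terms $R_g\ge0$, $|A_\Sigma|^2\ge0$, $H_{\partial M}\ge0$, I obtain the intrinsic estimate
\begin{equation}\label{eq:curvStability}
  \int_\Sigma\bigl(|\nabla\varphi|^2+K_\Sigma\varphi^2\bigr)+\int_{\partial\Sigma}k_{\partial\Sigma}\varphi^2 \;\ge\; \tfrac12\int_\Sigma(R_g+|A_\Sigma|^2)\varphi^2+\tfrac1{\sin\theta}\int_{\partial\Sigma}H_{\partial M}\varphi^2 \;\ge\;0
\end{equation}
for all $\varphi\in C_c^\infty(\Sigma)$; that is, the quadratic form $\varphi\mapsto\int_\Sigma(|\nabla\varphi|^2+K_\Sigma\varphi^2)+\int_{\partial\Sigma}k_{\partial\Sigma}\varphi^2$ --- the bordered form of the ``curvature operator'' $-\Delta_\Sigma+K_\Sigma$ with Robin condition $\partial_\nu\varphi+k_{\partial\Sigma}\varphi=0$ --- is nonnegative. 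Note that the angle $\theta$ has disappeared into this rewriting, and that properness of the immersion is used only to make sense of the boundary integrals.

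Next I would extract the conformal type of $\Sigma$ from \eqref{eq:curvStability} via the bordered Fischer--Colbrie--Schoen classification. A convenient packaging is by doubling: let $\hat\Sigma$ be the topological double of $\Sigma$ across $\partial\Sigma$; it carries a complete Lipschitz metric whose distributional Gauss curvature is $K_\Sigma$ on the two copies plus the fold measure $2k_{\partial\Sigma}\,d\ell$ along $\partial\Sigma$, and \eqref{eq:curvStability} together with the case of \eqref{eq:curvStability} for test functions vanishing on $\partial\Sigma$ says exactly that $\varphi\mapsto\int_{\hat\Sigma}|\nabla\varphi|^2+\int_{\hat\Sigma}K_{\hat\Sigma}\varphi^2$ is nonnegative on $\hat\Sigma$ (decompose a test function into its reflection-symmetric and reflection-antisymmetric parts). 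After regularizing the Lipschitz metric, this is the hypothesis of the Fischer--Colbrie--Schoen theorem \cite{Fischer-Colbrie-Schoen-The-structure-of-complete-stable}, so $\hat\Sigma$ is conformally $\mathbb{C}$ or $\mathbb{C}\setminus\{0\}$, and in the latter (parabolic) case the distributional Gauss curvature of $\hat\Sigma$ vanishes identically, i.e.\ $K_\Sigma\equiv0$ on $\Sigma$ and $k_{\partial\Sigma}\equiv0$ on $\partial\Sigma$. Undoing the double according to the reflection whose fixed set is $\partial\Sigma$ gives the three alternatives: a half-plane when $\hat\Sigma\cong\mathbb{C}$, and a half-cylinder or a strip when $\hat\Sigma\cong\mathbb{C}\setminus\{0\}$, depending on whether the fixed set is one circle or two lines; in the latter two cases $\Sigma$ is flat and $\partial\Sigma$ is a geodesic of $\Sigma$. (When $\theta=\pi/2$ one may shortcut this step: the double $\hat M$ of $M$ is then a smooth-enough $3$-manifold with $R_{\hat M}\ge0$ in which $\hat\Sigma$ is a stable minimal surface, and one invokes Lemma~\ref{fischer-colbrie} directly.) Finally, since the half-cylinder and strip are conformally parabolic, I would plug logarithmic cutoffs $\varphi_j\in C_c^\infty(\Sigma)$ with $0\le\varphi_j\le1$, $\varphi_j\to1$ locally uniformly and $\int_\Sigma|\nabla\varphi_j|^2\to0$ into \eqref{eq:curvStability} with $K_\Sigma\equiv0$ and $k_{\partial\Sigma}\equiv0$: the left-hand side tends to $0$, forcing $R_g\equiv0$ on $\Sigma$, $A_\Sigma\equiv0$ (so $\Sigma$ is totally geodesic) and $H_{\partial M}\equiv0$ on $\partial\Sigma$. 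This yields all the assertions.

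The main obstacle is the bordered analogue of the Fischer--Colbrie--Schoen theorem itself: deducing the conformal type of $\Sigma$ from the operator inequality \eqref{eq:curvStability}. One must either carry out the logarithmic-test-function and conformal-change-of-metric arguments directly for the mixed problem (interior potential $K_\Sigma$ together with the Robin datum $k_{\partial\Sigma}$), controlling the conformal factor up to $\partial\Sigma$ and verifying parabolicity of the strip and half-cylinder with reflecting boundary; or one must justify the doubling carefully, since the doubled metric is only Lipschitz and its Gauss curvature is a signed measure, so one needs that the conformal classification --- equivalently, the existence of a positive solution of the Jacobi-type equation with Robin data --- is unaffected, which is the content of the approximation argument in \cite{hongcrelle}. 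Once the conformal type and parabolicity are in hand, the remaining rigidity statements are routine consequences of forcing equality in \eqref{eq:curvStability}, as sketched above; this is why the flatness, the vanishing of $R_g$ and $H_{\partial M}$, and the geodesic property of $\partial\Sigma$ --- not stated in \cite[Corollary 4.5]{hongcrelle} --- can be read off from its proof.
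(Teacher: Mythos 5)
Your proposal takes essentially the same route as the paper: the paper does not prove this lemma itself but appeals to \cite[Corollary 4.5]{hongcrelle} (noting the extra conclusions follow "by following the proof there"), and that proof is precisely the Fischer--Colbrie--Schoen argument adapted to the capillary boundary that you reconstruct --- the stability inequality rewritten via the Gauss equation and the boundary identity $q=\tfrac{1}{\sin\theta}H_{\partial M}-k_{\partial\Sigma}$, the bordered/doubled conformal classification, and logarithmic cutoffs in the parabolic cases to force $|A_\Sigma|\equiv 0$, $R_g\equiv 0$ and $H_{\partial M}\equiv 0$. The one technical block you leave open (justifying the classification for the mixed/Robin problem, equivalently the regularization of the Lipschitz doubled metric, together with the Cohn--Vossen/Gauss--Bonnet input needed to kill the $K_\Sigma$ and $k_{\partial\Sigma}$ terms in the parabolic case) is exactly the content outsourced to \cite{hongcrelle}, so your sketch is consistent with the paper's treatment.
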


In the cases of half-cylinder or strip, the Gauss equation implies that the ambient Ricci tensor evaluated in the normal direction of $\Sigma$ vanishes along $\Sigma.$

\section{Energy-minimizing surfaces}
\label{sec:energy_minimizing_surfaces}

We define and collect some definitions and properties of energy-minimizing surfaces relevant to the proof of the main theorem.
Fix a smooth complete Riemannian $n$-manifold $(M,g)$ with non-empty boundary $\partial M$ (possibly disconnected) and a real value $\theta \in (0,\pi)$.
Nash's embedding theorem allows us to assume that $M$ is isometrically embedded into $\mathbb{R}^N$ for some $N$. Thus, currents on $M$ can be defined (refer to \cite[Chapter 6]{simon1983lectures} for more details).
For any $k$-dimensional oriented submanifold $\Sigma$ in $M$, $\llbracket \Sigma \rrbracket$ denotes the $k$-dimensional integral current in $M$ induced by $\Sigma$ and its orientation.

We divide the components of $\partial M$ into two parts: $\partial^s M$ and $\partial M \backslash \partial^s M$.
%We will use the following notations.
For any $k$-dimensional integral current $T$ in $M$, its boundary is separated into $\partial^s T := \partial T \lfloor (\partial^s M)$ and $\partial^i T := \partial T - \partial T \lfloor (\partial M)$.
Consequently, for any hypersurface $\Sigma$ in $M$, we define $\partial^s \Sigma$ and $\partial^i \Sigma$ such that
\[
	\llbracket \partial^s \Sigma\rrbracket=\partial^s \llbracket \Sigma\rrbracket,\quad \llbracket \partial^i \Sigma\rrbracket=\partial^i \llbracket \Sigma\rrbracket.
\]
Similarly, for any relatively open subset $U \subset M$, $\partial^s U$ and $\partial^i U$ are defined in the same manner.

%Now, we can define the concept of energy-minimizing with respect to $\partial^s M$.
%Let $\Sigma$ be a hypersurface in $M$ with boundary $\partial \Sigma \subset \partial M$.

%For any open subset $U\subset W$, we divide the boundary of $U$ into two parts: $\partial^s U := \partial U \cap \partial W$ and $\partial^i U := \overline{\partial U \setminus \partial^s U}$.

\begin{definition}
    For a hypersurface $\Sigma$ in $M$ with boundary $\partial \Sigma \subset \partial M$,
we say that $\Sigma$ is \textit{absolutely $\theta$-energy-minimizing} if, for any pre-compact (relatively) open subset $U\subset M$, and any hypersurface $\Sigma'$ with $\Sigma'=\Sigma$ in $M\setminus U$, and $\partial\Sigma'$ can be written as
\[
\partial \llbracket\Sigma\rrbracket=\partial \llbracket \Sigma'\rrbracket+\partial W,
\]
where $W$ is an $n$-dimensional integral current supported in $U \cap \partial M$, it holds that
\[
	|\Sigma\cap U|\le |\Sigma'\cap U|+\cos \theta |W|,
\]
where $|W|$ denotes the signed area of $W$.
Precisely, let $\omega_{\partial M}$ to be the volume form of $\partial M$, then we define $|W|=W(\omega_{\partial M})$.
\end{definition}

We say that $\Sigma$ is \textit{absolutely area-minimizing in the free boundary sense} if for any pre-compact (relatively) open subset $U\subset M$, and any hypersurface $\Sigma'$ with $\Sigma'=\Sigma$ in $M\setminus U$,
we have
\[
	|\Sigma\cap U|\le |\Sigma'\cap U|.
\]
If $U\cap \partial M$ is empty in above definition, then it reduces to the usual definition of absolutely area-minimizing in the interior. Note that in general, the absolutely $\frac{\pi}{2}$-energy-minimizing property does not imply the absolutely area-minimizing property in the free boundary sense.

%Then we define
%\[
	%|W|=\int_{\partial M} W(\omega_{\partial M}).
%\]

%If $\Sigma$ is absolutely energy-minimizing, then for any $\Sigma'$, 

We shall further introduce a few definitions that will be useful in our proof. From now on, we will fix $\partial^s M$ and assume that $\partial\Sigma\subset \partial^s M$. Moreover, we will use open subset to mean relatively open subset. 

\begin{definition}
    We say that $\Sigma$ is \textit{homologically $\theta$-energy-minimizing} if for any pre-compact open subset $U\subset M$ and any hypersurface $\Sigma'$ with $\partial\Sigma'\subset \partial^sM$ such that
\[
	\llbracket \Sigma\rrbracket=\llbracket \Sigma'\rrbracket+\partial^iT,
\]
where $T$ is an $(n+1)$-dimensional integral current which has compact support in $U$, we have
\[
	|\Sigma\cap U|\le|\Sigma'\cap U|+\cos \theta |\partial^s T|.
\]
\end{definition}

%$\Sigma$ is called \textit{homologically energy-minimizing} if for any pre-compact open subset $U\subset M$, and any $\Sigma'$ such that
%\[
	%\llbracket \Sigma\rrbracket=\llbracket \Sigma'\rrbracket+\partial^i T
%\]
%where $T$ is an $(n+1)$-dimensional integral current which has compact support in $U$, we have
%\[
	%|\Sigma\cap U|\le|\Sigma'\cap U|+\cos \theta |\partial^s T|.
%\]
%In other word, we may write $\Omega=\sum \pm|\Omega_i|$.

%We may also need to assume $\partial^s \Omega$ is contained in $U$.

%Fixing $\partial^s M$, and let $\Sigma$ to be a hypersurface such that $\partial \Sigma\subset \partial^s M$.
% Given an open subset $U$ in $M$, we say that $\Sigma$ is \textit{homologically$^*$ $\theta$-energy-minimizing with respect to $\partial^sM$} in $U$ if $\partial\Sigma\cap U\subset \partial^s M$ and for any pre-compact open subset $U\subset M$, and any hypersurface $\Sigma'$ with $\partial\Sigma'\subset \partial^sM$ such that
\begin{definition}
    Given an open subset $U\subset M$, we say that $\Sigma$ is \textit{homologically$^*$ $\theta$-energy-minimizing} in $U$ if, for any pre-compact open subset $V\subset\subset U$ and any hypersurface $\Sigma'$ with $\partial\Sigma'\cap U\subset \partial^sM$ and
\[
	\llbracket \Sigma\rrbracket=\llbracket \Sigma'\rrbracket+\partial^i T,
\]
where $T$ is an $(n+1)$-dimensional integral current with $\partial^sT$ and $\partial^iT$ having compact supports in $V$, the following inequality holds:
\[
	|\Sigma\cap V|\le|\Sigma'\cap V|+\cos \theta |\partial^sT|.
\]
\end{definition}

In particular, if $U=M$, we say that $\Sigma$ is homologically$^*$ $\theta$-energy-minimizing in $M$.
At last, given an open subset $U\subset M$, we say that $\Sigma$ is \textit{homologically$^*$ area-minimizing in the free boundary sense} in $U$ if, for any pre-compact open subset $V\subset\subset U$ and any $\Sigma'$ such that
\[
	\llbracket \Sigma\rrbracket=\llbracket \Sigma'\rrbracket+\partial^i T,
\]
where $T$ is an $(n+1)$-dimensional integral current with $\partial^iT$ having compact support in $V$, we have
\[
	|\Sigma\cap V|\le|\Sigma'\cap V|.
\]
%If in addition, $\theta=\frac{\pi}{2}$, we say $\Sigma$ is homologically$^*$ area-minimizing in the free boundary sense.

Under the above setting, we now prove two propositions.
\begin{proposition}
	\label{prop:homologically_energy_minimizing}
	If $\Sigma$ is homologically$^*$ $\theta$-energy-minimizing in $M$, then each component of $\Sigma$ is homologically$^*$ $\theta$-energy-minimizing in $M$.
\end{proposition}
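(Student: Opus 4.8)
The plan is to show that if some component $\Sigma_0$ of $\Sigma$ failed to be homologically$^*$ $\theta$-energy-minimizing in $M$, then one could splice a competitor for $\Sigma_0$ together with the remaining components of $\Sigma$ to produce a competitor for $\Sigma$ itself that strictly beats it, contradicting the hypothesis. So suppose $V\subset\subset M$ is a pre-compact open set, $\Sigma_0'$ is a hypersurface with $\partial\Sigma_0'\cap M\subset\partial^sM$, and $T_0$ is an $(n+1)$-dimensional integral current with $\partial^sT_0,\partial^iT_0$ compactly supported in $V$, such that $\llbracket\Sigma_0\rrbracket=\llbracket\Sigma_0'\rrbracket+\partial^iT_0$ but
\[
|\Sigma_0\cap V|>|\Sigma_0'\cap V|+\cos\theta\,|\partial^sT_0|.
\]

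First I would set $\Sigma'=\Sigma_0'\cup(\Sigma\setminus\Sigma_0)$, i.e.\ the competitor that replaces only the component $\Sigma_0$, and take $T=T_0$. One then checks that $(\Sigma',T)$ is an admissible competitor for $\Sigma$ in the definition of homologically$^*$ $\theta$-energy-minimizing on $V$: the current identity $\llbracket\Sigma\rrbracket=\llbracket\Sigma'\rrbracket+\partial^iT$ follows by adding $\llbracket\Sigma\setminus\Sigma_0\rrbracket$ to both sides of $\llbracket\Sigma_0\rrbracket=\llbracket\Sigma_0'\rrbracket+\partial^iT_0$, the boundary condition $\partial\Sigma'\cap M\subset\partial^sM$ holds since it holds for $\Sigma_0'$ and for each component of $\Sigma\setminus\Sigma_0$ (which lies in $\Sigma$), and $\partial^sT=\partial^sT_0$, $\partial^iT=\partial^iT_0$ are compactly supported in $V$. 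Hence the hypothesis gives $|\Sigma\cap V|\le|\Sigma'\cap V|+\cos\theta\,|\partial^sT|$.

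The key point is then the mass decomposition $|\Sigma\cap V|=|\Sigma_0\cap V|+|(\Sigma\setminus\Sigma_0)\cap V|$ and similarly $|\Sigma'\cap V|=|\Sigma_0'\cap V|+|(\Sigma\setminus\Sigma_0)\cap V|$, which holds because the area functional is additive over disjoint pieces of a hypersurface. Subtracting the common term $|(\Sigma\setminus\Sigma_0)\cap V|$ from the inequality just obtained yields $|\Sigma_0\cap V|\le|\Sigma_0'\cap V|+\cos\theta\,|\partial^sT_0|$, contradicting the strict inequality above. Therefore every component of $\Sigma$ is homologically$^*$ $\theta$-energy-minimizing in $M$.

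The only genuinely delicate point — and where I would spend care rather than hand-wave — is verifying that $\Sigma'$ as defined is a bona fide \emph{hypersurface} in the sense the definitions require (so that $\llbracket\Sigma'\rrbracket$ and $|\Sigma'\cap V|$ make sense and decompose additively): $\Sigma_0'$ agrees with $\Sigma_0$ outside the support of $T_0$, hence outside a compact subset of $V$, so $\Sigma_0'$ and $\Sigma\setminus\Sigma_0$ are disjoint away from that compact set; inside $V$ one must know that $\Sigma_0'$ can be chosen disjoint from the other components (or argue at the level of currents, where the identity and the mass additivity are automatic from $\llbracket\Sigma'\rrbracket=\llbracket\Sigma_0'\rrbracket+\llbracket\Sigma\setminus\Sigma_0\rrbracket$ as currents with disjoint—up to a set of measure zero—supports). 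I expect this bookkeeping to be routine given the conventions already fixed in Section \ref{sec:energy_minimizing_surfaces}, so the proposition should follow in a few lines.
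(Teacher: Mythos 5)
Your argument is correct and is essentially the paper's own proof: you replace the single component $\Sigma_0$ by its competitor, keep $\Sigma\setminus\Sigma_0$ unchanged, apply the minimizing property of $\Sigma$ to the spliced competitor, and cancel the common area term (the paper argues directly rather than by contradiction, but this is only a difference of framing). The bookkeeping point you flag at the end is exactly the step the paper treats as immediate, so nothing further is needed.
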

\begin{proof}
	Let $\Sigma_0$ be the component of $\Sigma$.
Fix a pre-compact open subset $U$ of $M$, and consider any $(n+1)$-dimensional integral current $T$ such that
\[
	\llbracket \Sigma_0\rrbracket=\llbracket \Sigma_0'\rrbracket+\partial^iT,\quad \text{ and }\quad \mathrm{spt}(\partial^i T+\partial^sT)\subset U,
\]
for some hypersurface $\Sigma_0^{'}$.
Note that we can also write it as
\[
	\llbracket \Sigma\rrbracket=\llbracket \Sigma_0'\rrbracket+\llbracket \Sigma\backslash \Sigma_0\rrbracket+\partial^iT.
\]
Since $\Sigma$ is homologically$^*$ $\theta$-energy-minimizing in $M$, we have
\[
	|\Sigma\cap U|\le |\Sigma_0'\cap U|+|(\Sigma\backslash \Sigma_0)\cap U|+\cos \theta |\partial^sT|.
\]
Therefore, it follows that:
\[
	|\Sigma_0\cap U|\le |\Sigma_0'\cap U|+\cos \theta |\partial^sT|.
\]
Thus we complete the proof.
\end{proof}

\begin{proposition}
	
	\label{prop_convergence}
	Let $U_i\subset \subset M$ be a sequence of pre-compact open subsets of $M$ such that $\cup_i U_i=M$.
	%Fixing $\partial^s M$,
	Suppose $\Sigma_i$ is a sequence of homologically$^*$ $\theta$-energy-minimizing hypersurfaces in $U_i$ and $\Sigma_i$ converges to $\Sigma$ locally and smoothly.
	Then, $\Sigma$ is homologically$^*$ $\theta$-energy-minimizing in $M$.
\end{proposition}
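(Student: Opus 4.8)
The plan is to unwind the definition of homologically$^*$ $\theta$-energy-minimizing for $\Sigma$ and reduce any competitor to one against which the $\Sigma_i$ can be tested for $i$ large. Fix a pre-compact open subset $V \subset\subset M$, a competitor hypersurface $\Sigma'$ with $\partial\Sigma' \cap M \subset \partial^s M$, and an $(n+1)$-dimensional integral current $T$ with $\partial^i T$ and $\partial^s T$ having compact supports in $V$, satisfying $\llbracket \Sigma\rrbracket=\llbracket \Sigma'\rrbracket+\partial^i T$. Since $\cup_i U_i = M$ and $\overline{V}$ is compact, there is an index $i_0$ with $\overline{V} \subset U_i$ for all $i \ge i_0$; in particular $V \subset\subset U_i$, so $\Sigma_i$ can legitimately be tested against competitors supported in $V$.

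The first technical step is to promote the smooth local convergence $\Sigma_i \to \Sigma$ to a statement about currents and masses on the fixed set $V$. Enlarge $V$ slightly to an open $V'$ with $\overline{V}\subset V' \subset\subset M$ and $\overline{V'} \subset U_i$ for $i$ large; smooth local convergence gives $\llbracket \Sigma_i \rrbracket \to \llbracket \Sigma \rrbracket$ as currents on $V'$ and $|\Sigma_i \cap V| \to |\Sigma \cap V|$ (the boundary $\partial V$ may be chosen, by Sard, so that $\Sigma$ and the $\Sigma_i$ meet it transversally and no mass concentrates there). Now write, for $i \ge i_0$,
\[
\llbracket \Sigma_i \rrbracket = \llbracket \Sigma' \rrbracket + (\llbracket \Sigma_i \rrbracket - \llbracket \Sigma \rrbracket) + \partial^i T.
\]
The correction current $R_i := \llbracket \Sigma_i \rrbracket - \llbracket \Sigma \rrbracket$ is supported in $V'$ and converges to $0$; by the deformation/isoperimetric lemma for integral currents (or by using that $R_i$ is a boundary for $i$ large, since $\Sigma_i$ and $\Sigma$ are smoothly close and hence isotopic on $V'$), we may write $R_i = \partial^i S_i + (\text{a hypersurface piece } \llbracket \Gamma_i \rrbracket)$ where $S_i$ is an $(n+1)$-current with $\partial^i S_i$, $\partial^s S_i$ supported in $V'$, $\mathbf{M}(S_i) \to 0$, $|\partial^s S_i| \to 0$, and $\Gamma_i$ is a hypersurface with $\partial \Gamma_i \cap U_i \subset \partial^s M$ and $|\Gamma_i \cap V'| \to 0$. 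Absorbing $\Gamma_i$ into the competitor, set $\Sigma_i' := \Sigma' + \Gamma_i$ and $T_i' := T + S_i$; then $\llbracket \Sigma_i \rrbracket = \llbracket \Sigma_i' \rrbracket + \partial^i T_i'$ with all supports in $V' \subset\subset U_i$.

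Applying the homologically$^*$ $\theta$-energy-minimizing property of $\Sigma_i$ in $U_i$ (tested on $V'$, which we may take so that masses on $V'$ are controlled by masses on $V$ up to $o(1)$) gives
\[
|\Sigma_i \cap V| \le |\Sigma_i' \cap V'| + \cos\theta\,|\partial^s T_i'| \le |\Sigma' \cap V'| + |\Gamma_i \cap V'| + \cos\theta\,|\partial^s T| + |\cos\theta|\,|\partial^s S_i|.
\]
Letting $i \to \infty$, the left side tends to $|\Sigma \cap V|$, while $|\Gamma_i \cap V'| \to 0$ and $|\partial^s S_i| \to 0$; choosing $V'$ shrinking to $V$ along a suitable sequence then yields $|\Sigma \cap V| \le |\Sigma' \cap V| + \cos\theta\,|\partial^s T|$, which is exactly the desired inequality. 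Since $V$ and the competitor $(\Sigma', T)$ were arbitrary, $\Sigma$ is homologically$^*$ $\theta$-energy-minimizing in $M$.

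The main obstacle is the middle step: controlling the ``defect'' current $R_i = \llbracket \Sigma_i \rrbracket - \llbracket \Sigma \rrbracket$ uniformly on a \emph{fixed} open set and splitting it into a small-mass filling $S_i$ plus a small-area hypersurface correction $\Gamma_i$ whose boundary stays inside $\partial^s M$ — the boundary bookkeeping (keeping $\partial \Gamma_i \subset \partial^s M$ and $\partial^s S_i$ small in signed area, not just mass) is the delicate part, and is where smooth local convergence up to the boundary $\partial M$, rather than mere flat convergence, is essential. Everything else is routine: transversality to arrange good slicing sets $\partial V$, lower/upper semicontinuity of mass under the strong convergence at hand, and the triangle-inequality absorption of the correction terms into the competitor.
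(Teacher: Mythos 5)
Your overall strategy---test $\Sigma_i$ against a modification of the given competitor $(\Sigma',T)$ for $i$ large and then pass to the limit---is exactly the adaptation of \cite[Theorem 34.5]{simon1983lectures} that the paper's (very terse) proof invokes, but the crucial middle step as you wrote it is false. The defect current $R_i=\llbracket\Sigma_i\rrbracket-\llbracket\Sigma\rrbracket$ is \emph{not} supported in $V'$ and does not tend to $0$ globally: $\Sigma_i$ and $\Sigma$ differ throughout $U_i$ (and $\Sigma$ continues outside $U_i$, where $\Sigma_i$ need not exist at all), so $\mathrm{spt}\,R_i$ is spread over essentially all of $M$. Hence the decomposition $R_i=\partial^i S_i+\llbracket\Gamma_i\rrbracket$ with both $S_i$ and $\Gamma_i$ small and supported in $V'$ cannot hold, and the pair $(\Sigma_i',T_i')=(\Sigma'+\Gamma_i,\,T+S_i)$ is not an admissible test pair for $\Sigma_i$: since $\partial^iT_i'$ is compactly supported, the required identity $\llbracket\Sigma_i\rrbracket=\llbracket\Sigma_i'\rrbracket+\partial^iT_i'$ forces $\llbracket\Sigma_i'\rrbracket=\llbracket\Sigma_i\rrbracket$ outside a compact set, whereas your $\Sigma_i'$ equals $\llbracket\Sigma\rrbracket$ (up to the small $\Gamma_i$) there. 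The subsequent energy comparison therefore compares $\Sigma_i$ with a surface that is not a legitimate competitor.

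The repair is the cut-and-paste that Simon's argument actually performs: the competitor for $\Sigma_i$ must agree with $\Sigma_i$, not with $\Sigma'$, outside a compact region, and only the localized defect gets filled. Concretely, choose $V\subset\subset V''\subset\subset V'$ with $\overline{V'}\subset U_i$; by smooth convergence $\Sigma_i$ is a small normal graph over $\Sigma$ on $V'$, and one lets $S_i$ be the thin slab between $\Sigma$ and $\Sigma_i$ over $V''$, so that $\partial^i S_i=(\llbracket\Sigma_i\rrbracket-\llbracket\Sigma\rrbracket)\lfloor V''\pm\llbracket\Gamma_i\rrbracket$, where $\Gamma_i$ is the small lateral slice near $\partial V''$ and $\partial^s S_i$ is a thin strip in $\partial^s M$, both with areas tending to $0$. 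The admissible competitor is then $\Sigma'$ inside $V''$, plus $\Gamma_i$, plus $\Sigma_i\lfloor(M\setminus V'')$, with filling current $T+S_i$; the portion of $\Sigma_i$ outside $V''$ appears on both sides of the minimizing inequality and cancels, the $\Gamma_i$ and $\partial^s S_i$ terms vanish in the limit, and your final limit passage (mass convergence on $V$ with $\partial V$ chosen transversal) then goes through. Note also that your standalone requirement $\partial\Gamma_i\cap U_i\subset\partial^s M$ is neither achievable for the lateral slice (its boundary consists of interior curves on $\Sigma$ and $\Sigma_i$) nor needed: only the total competitor current must have boundary in $\partial^s M$, and the interior curves cancel in the sum. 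With this corrected competitor your write-up matches the argument the paper delegates to Simon; without it the proof has a genuine gap.
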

\begin{proof}
	Without loss of generality, we may assume that $U_i\subset U_{i+1}$ for all $i$.
	Given an $(n+1)$-dimensional integral current $T$ such that
	\[
		\llbracket \Sigma\rrbracket=\llbracket \Sigma'\rrbracket+\partial^i T,\quad \text{ and }\quad \mathrm{spt}(\partial^iT+\partial^s T)\subset U,
	\]
	for some $U\subset \subset M$, and some hypersurface $\Sigma'$.
    We consider $i$ large enough such that $U\subset U_i$.
	Note that the homologically$^*$ $\theta$-energy-minimizing can pass to the limit in $U$, by adapting the proof of the area-minimizing can be passed to the limit in $U$ for currents (see \cite[Theorem 34.5]{simon1983lectures}).
	Then, we have
	\[
		|\Sigma\cap U|\le |\Sigma'\cap U|+\cos \theta |\partial^s T|.
	\]
	This completes the proof.
	% We consider $i$ large enough such that $U\subset U_i$.
	% Since each $\Sigma_j$ is homologically $\theta$-energy-minimizing in $U$ with respect to $\partial^s M$ for all $j\ge i$, and $\Sigma_j$ converges to $\Sigma$ smoothly in $U$, we know that $\Sigma$ is homologically $\theta$-energy-minimizing in $U$ with respect to $\partial^s M$.
	% Then, we have
	% \[
	% 	|\Sigma\cap U|\le |\Sigma'\cap U|+\cos \theta |\partial^s T|.
	% \]
	% This completes the proof.
\end{proof}

\section{Proof of Main Theorem}\label{sec:proof}
% \begin{definition}
%     Let $\Sigma$ be a noncompact orientable surface in an orientable 3-manifold $M$ such that $\partial\Sigma\subset \partial M$. We say that $\Sigma$ is absolutely area-minimizing if, for any compact subset $\Omega\subset \Sigma$, we have
%     \[\text{area}\ \Omega<\text{area}\ V\]
%     where $V$ is any oriented surface in $M$ such that $\partial^i V=\partial^i\Omega$. Here $\partial^i V$ denotes the interior of $\partial V\cap M^\circ.$ Denote $\partial^s V=\partial V\cap \partial M.$
% \end{definition}

% It is readily to see that, if $\Sigma$ is area-minimizing, it is absolutely area-minimizing in its interior and a free boundary surface, thus a stable free boundary minimal surface in $M$.

In the following we lift the ambient manifold $M$ to a cover such that the inclusion map of the surface induces a surjection at the level of fundamental group.% Again in this section we set $\theta\in(0,\pi/2).$

\begin{lemma}\label{cover}
    Let $(M,g)$ be an orientable, Riemannian 3-manifold with noncompact complete boundary and $\Sigma\subset M$ be a properly embedded, orientable, absolutely $\theta$-energy-minimizing surface with boundary in $\partial M$. There exists a covering $p:\tilde{M}\rightarrow M$ such that $p_*(\pi_1(\tilde{M}))=i_*(\pi_1(\Sigma))$ where $i$ is the inclusion map of $\Sigma.$ The inclusion map lifts to a proper embedding $\tilde{i}: \Sigma\rightarrow \tilde{M}$ and $\tilde{i}_*: \pi_1(\Sigma)\rightarrow \pi_1(\tilde{M})$ is surjective. Moreover, $\tilde{\Sigma}=\tilde{i}(\Sigma)\subset \tilde{M}$ is absolutely $\theta$-energy-minimizing in $ (\tilde{M},\tilde{g})$ where $\tilde{g}=p^* g.$
\end{lemma}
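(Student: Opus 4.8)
The plan is to realize the desired cover as the one associated to the subgroup $i_*(\pi_1(\Sigma)) \le \pi_1(M)$ via covering space theory, and then to check that the surface lifts properly and that the $\theta$-energy-minimizing property is inherited. First I would pass to the based setting: fix a basepoint $x_0 \in \Sigma$, and consider $H := i_*(\pi_1(\Sigma, x_0)) \le \pi_1(M, x_0)$. Since $M$ is a manifold (hence locally path-connected and semilocally simply connected), the classification of covering spaces gives a connected covering $p : (\tilde M, \tilde x_0) \to (M, x_0)$ with $p_*(\pi_1(\tilde M, \tilde x_0)) = H$. The inclusion $i : \Sigma \hookrightarrow M$ satisfies $i_*(\pi_1(\Sigma, x_0)) = H \subseteq p_*(\pi_1(\tilde M, \tilde x_0))$, so by the lifting criterion $i$ lifts to a continuous (indeed smooth, since $p$ is a local diffeomorphism and $i$ is a smooth embedding) map $\tilde i : \Sigma \to \tilde M$ with $p \circ \tilde i = i$. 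Equip $\tilde M$ with $\tilde g = p^* g$, so that $p$ is a local isometry; note $\partial \tilde M = p^{-1}(\partial M)$ and $p$ restricts to a covering $\partial \tilde M \to \partial M$, so $\tilde i(\Sigma) \cap \partial \tilde M = \tilde i(\partial \Sigma)$, i.e.\ the lift is still a surface with boundary in $\partial \tilde M$.

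Next I would verify that $\tilde i$ is an embedding and that $\tilde i_*$ is surjective. For surjectivity: by construction $p_* \tilde i_* = i_*$ has image $H = p_* (\pi_1 \tilde M)$, and since $p_*$ is injective this forces $\tilde i_*(\pi_1(\Sigma, x_0)) = \pi_1(\tilde M, \tilde x_0)$. For injectivity of $\tilde i$ on points: if $\tilde i(a) = \tilde i(b)$ then $i(a) = i(b)$, and since $i$ is an embedding $a = b$; together with $\tilde i$ being an immersion this gives that $\tilde i$ is a smooth embedding. Properness of $\tilde i$ requires a small argument: given a compact $K \subset \tilde M$, its image $p(K)$ is compact, so $i^{-1}(p(K)) = \{x \in \Sigma : i(x) \in p(K)\}$ is compact in $\Sigma$ by properness of $i$; since $\tilde i^{-1}(K) \subseteq i^{-1}(p(K))$ is closed in this compact set, it is compact. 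Finally, properly embeddedness with boundary in $\partial \tilde M$ — i.e.\ $\tilde\Sigma \cap \partial\tilde M = \tilde i(\partial\Sigma)$ with interior disjoint from $\partial\tilde M$ — follows from the corresponding property of $\Sigma$ in $M$ together with $\partial\tilde M = p^{-1}(\partial M)$.

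It remains to transfer the absolutely $\theta$-energy-minimizing property from $\Sigma$ to $\tilde\Sigma = \tilde i(\Sigma)$. Let $\tilde U \subset\subset \tilde M$ be a precompact relatively open set and $\tilde\Sigma'$ a competitor agreeing with $\tilde\Sigma$ off $\tilde U$ with $\partial\llbracket\tilde\Sigma\rrbracket = \partial\llbracket\tilde\Sigma'\rrbracket + \partial \tilde W$ for an integral current $\tilde W$ supported in $\tilde U \cap \partial\tilde M$. Because $\overline{\tilde U}$ is compact, $p|_{\overline{\tilde U}}$ is a covering map onto its image and, after possibly enlarging, one can choose a precompact open $U \supseteq p(\tilde U)$ in $M$ over which $p$ is still a local isometry; the key point is that $p_* $ is a local isometry, so it preserves areas: $|p_\#(\tilde\Sigma' \cap \tilde U)|$, computed with multiplicity, need not be controlled directly, so instead I would argue locally — cover $\overline{\tilde U}$ by finitely many open sets on which $p$ is an isometry onto its image, and push the competitor, the comparison surface, and $\tilde W$ forward piece by piece, using that $p$ is injective on each such piece. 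Concretely, since $p$ is a local isometry and $\tilde\Sigma$ is the (injective) lift of $\Sigma$, one has $|\tilde\Sigma \cap \tilde V| = |\Sigma \cap p(\tilde V)|$ whenever $p$ is injective on $\tilde V$; combining the local minimality inequalities $|\Sigma \cap p(\tilde V)| \le |p_\#(\tilde\Sigma' \cap \tilde V)| + \cos\theta\, |p_\# \tilde W_V|$ obtained from the hypothesis on $\Sigma$, and summing over a partition of $\tilde U$ into such pieces, yields $|\tilde\Sigma \cap \tilde U| \le |\tilde\Sigma' \cap \tilde U| + \cos\theta\, |\tilde W|$. The main obstacle I expect is precisely this last bookkeeping: making the push-forward/pull-back of currents across the covering compatible with the constraint $\partial\llbracket\tilde\Sigma\rrbracket = \partial\llbracket\tilde\Sigma'\rrbracket + \partial\tilde W$ — i.e.\ checking that $p_\# \tilde\Sigma'$ is an admissible competitor for $\Sigma$ with $p_\#\tilde W$ supported in $\partial M$, and that the signed-area term $|W| = W(\omega_{\partial M})$ transforms correctly under $p$ since $p^*\omega_{\partial M} = \omega_{\partial\tilde M}$. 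Once the competitor on $\tilde M$ is shown to descend (on a fundamental-domain-like decomposition) to an admissible competitor on $M$, the inequality transfers and the proof is complete.
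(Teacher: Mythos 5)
The covering-space half of your argument (construction of the cover associated to $i_*(\pi_1(\Sigma))$, the lifting criterion, injectivity and properness of $\tilde i$, surjectivity of $\tilde i_*$) is fine and is exactly what the paper takes for granted. The gap is in the transfer of the minimizing property, and it sits precisely where you flag "the main obstacle": your localization scheme does not work, and the step you defer is the actual content of the proof. The hypothesis on $\Sigma$ is not a local inequality that can be applied on arbitrary small pieces: it only compares $\Sigma$ with hypersurfaces that agree with $\Sigma$ \emph{outside} a precompact set and whose boundary differs from $\partial\llbracket\Sigma\rrbracket$ by $\partial W$ for a current $W$ in $\partial M$. If you chop $\tilde\Sigma'$ into pieces $\tilde\Sigma'\cap\tilde V$ with $p$ injective on $\tilde V$, the pushed-forward piece does not agree with $\Sigma$ near $\partial(p(\tilde V))$ (the competitor differs from $\tilde\Sigma$ throughout $\tilde U$, and cutting creates new interior boundary), so the "local minimality inequalities" $|\Sigma\cap p(\tilde V)|\le |p_\#(\tilde\Sigma'\cap\tilde V)|+\cos\theta\,|p_\#\tilde W_V|$ are simply not consequences of the hypothesis and are false in general; moreover, summing over a partition would also have to account for distinct sheets of $\tilde U$ lying over the same region of $M$. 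So the claimed chain of inequalities does not assemble.

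What is actually needed (and what the paper does, arguing by contradiction) is to descend the \emph{whole} competitor at once: if $\tilde\Sigma'$ together with $\tilde W$ beat $\tilde\Sigma$, push them forward by $p$. Since $p$ is a $1$-Lipschitz local isometry, $\mathbf{M}(p_\#\llbracket\tilde\Sigma'\cap\tilde U\rrbracket)\le|\tilde\Sigma'\cap\tilde U|$; the signed wetting area is preserved exactly because $p^*\omega_{\partial M}=\omega_{\partial\tilde M}$; and $p_\#$ commutes with $\partial$, so the boundary relation descends; also $p|_{\tilde\Sigma}$ is a bijection onto $\Sigma$, so the projected object agrees with $\Sigma$ outside the compact set $p(\overline{\tilde U})$. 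The remaining issue is that $p_\#\llbracket\tilde\Sigma'\rrbracket$ need not be an embedded hypersurface (it can have self-overlaps and multiplicities), so it is not yet an admissible competitor in the sense of the definition; this is resolved by a cut-and-paste argument producing a properly embedded competitor with no larger energy, which then contradicts the minimality of $\Sigma$ in $M$. Your proposal correctly records the area and wetting-area bookkeeping under $p$, but replaces this descent-plus-cut-and-paste step with an invalid piecewise application of the hypothesis and otherwise leaves it as an acknowledged obstacle, so the proof of the minimality statement is incomplete.
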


\begin{proof}
    Suppose $\tilde{\Sigma}$ is not absolutely $\theta$-energy-minimizing. Consider a competing surface in $\tilde{M}$ that shares the same interior boundary of a compact subset of $\tilde{\Sigma}$. Projecting downstairs and using cut and past argument we obtain a properly embedded competitor, contradicting the $\theta$-energy-minimizing property of $\Sigma$ in $M.$
\end{proof}
\begin{remark}
    Note that above lemma still holds if we replace the absolutely $\theta$-energy-minimizing by the absolutely area-minimizing in the free boundary sense.
\end{remark}

% We say that a surface $\Sigma'$ is \textit{homologically$^*$ area-minimizing in the free boundary sense} if, for any pre-compact open subset $U\subset \hat{M}$, we have
% 	\[
% 		\mathrm{Area}_g(U\cap \Sigma')\leq \mathrm{Area}_g(U\cap \tilde{\Sigma})
% 	\]
% 	where $\tilde{\Sigma}$ is any oriented, properly, embedded surface with $\partial\tilde{\Sigma}=\partial^s \tilde{\Sigma}$ in $\hat{M}$ such that
% 	\[
% 		\tilde{\Sigma}= \Sigma' + \partial^i \Omega_1+\cdots +\partial^i \Omega_k
% 	\]
% 	with $\Omega_1,\cdots ,\Omega_k\subset M$ being properly embedded top-dimensional submanifolds with $\partial^i\Omega_j\subset U$ for $j=1,\cdots,k$.
% 	For futher details on the notion of homologically$^*$ area-minimizing in the free boundary sense, refer to Appendix A of \cite{chodoshsplittingtheorem}.
% 	Those properties can be naturally extended to the free boundary case.

We now give a proof of the main theorem.
\begin{proof}
	[Proof of Theorem \ref{maintheorem} and Theorem \ref{maintheorem2}]

	Let $S \subset M$ be a properly embedded half-cylinder or a flat strip that is absolutely $\theta$-energy-minimizing or absolutely area-minimizing in the free boundary sense.
	If $S$ is a flat strip and $S$ is absolutely $\theta$-energy-minimizing, we further assume that each component of $\partial S$ belongs to two different boundary components of $\partial M$. This means that the number of boundary components of $M$ is at least two.
    
        In this proof, we mainly focus on the absolutely $\theta$-energy-minimizing case.
        Most of the arguments can be straightforwardly adapted to the case where $S$ is absolutely area minimizing in the free boundary sense, expect for the case where $S$ is a strip, which we will discuss in more detail.

	By Lemma \ref{lemma:flatnessBdry}, $S$ is totally geodesic, flat and its boundary is geodesic. Scaling allows us to assume that $S$ is isometric to either the standard half-cylinder $\mathbb{S}^1 \times \mathbb{R}^+$ or the flat strip $[0,a] \times \mathbb{R}$ for some $a>0$, depending on the case.
    %choose a>1 in order to make sure the small ball around p does not intersect \partial M.

	Using Lemma \ref{cover}, we may further assume that the inclusion map $i: S \to M$ induces a surjection $i_*: \pi_1(S) \to \pi_1(M)$. In particular, if $S$ is a strip, then $\pi_1(M)$ is trivial.

	If $S$ is separating, then $M\backslash S$ has two components. Otherwise, it is connected after cutting. We choose $\hat{M}$ to be one of the components of $M \setminus S$. Note that the boundary of $\hat{M}$ consists of two parts:
	$\partial \hat{M} \cap \partial M$ and $\partial \hat{M} \setminus \partial M$.
   We denote $\partial^s \hat{M}$ to be the union of the components of $\partial \hat{M} \cap \partial M$ that has non-empty intersection with $\partial S$. Then $\partial \hat{M}\backslash \partial M$ contains one component or two components (if $S$ is non-separating). In either case, we choose one and denote it by $\Sigma$. Apparently, $\Sigma$ is isometric to $S$.
	In all, we have a new manifold $(\hat{M},\hat{g})$ whose boundary can be divided into following three parts:
    \begin{itemize}
        \item $\partial^s \hat{M}$.
        \item $\Sigma$, or $\Sigma\cup \Sigma.$
        \item $\partial^e\hat{M}:=(\partial\hat{M}\cap \partial M)\backslash \partial^s \hat{M}$.
    \end{itemize} Let $\partial^s$ to be the operator defined in the previous section with respect to $\partial^s \hat{M}$, so
     we have that $\partial^iT=\partial T\lfloor(\hat{M}\backslash \partial M)$ for any $k$-dimensional integral current $T$, $k=2,3$.
	Since $\Sigma$ is absolutely $\theta$-energy-minimizing, the contact angle between $\Sigma$ and $\partial^s \hat{M}$ is either $\theta$ or $\pi-\theta$, depending on the orientation of $\Sigma$.
	Without loss of generality, we assume that the contact angle is $\theta$, and the orientation of $\Sigma$ agrees with the orientation induced from $\hat{M}$ and the outward normal vector field of $\Sigma$.
	%Such choice will make sure $\partial\llbracket \hat{M}\rrbracket\lfloor(\partial^s\hat{M})=\llbracket \Sigma\rrbracket$.

	%\( \partial^s \hat{M} := \partial \hat{M} \cap \partial M \) and \( \partial^i \hat{M} := \overline{\partial \hat{M} \setminus \partial^s \hat{M}} \).
	%Note that $\partial^s \hat{M}$ intersects $\partial^i \hat{M}$ at a fixed contact angle $\theta$. Let $\Sigma$ denote a component of \( \partial^i \hat{M} \) that is isometric to $S$. In particular, $\Sigma$ is absolutely energy-minimizing in $\hat{M}$.

	%Without loss of generality, we choose the orientation of $\Sigma$ to be agree with the orientation induced from $\hat{M}$ and the (outer) normal vector field of $\Sigma$ and choose $\theta$ to be the angle between the outward normal of $\Sigma$ and the outward normal of $\partial^s \hat{M}$.
	%Such choice will make sure $\partial^i\llbracket \hat{M}\rrbracket=\llbracket \Sigma\rrbracket$.

	%For simplicity, we will distinguish the boundary of any $k$-dimensional submanifold $N$ of $\hat{M}$ by
	%\[
	%\partial^s N := \partial \Omega \cap \partial^s \hat{M}, \quad \partial^i \Omega := \overline{\partial \Omega \setminus \partial^s \Omega}.
	%\]

	We denote by $\Sigma_h$ the portion of $\Sigma$ corresponding to $\mathbb{S}^1 \times [0,h]$ if $S$ is a half-cylinder, or $[0,a] \times [-h,h]$ if $S$ is a strip. The curve $\gamma$ on $\Sigma$ corresponds to $\mathbb{S}^1 \times \{1\}$ if $S$ is a half-cylinder, or $[0,a] \times \{0\}$ if $S$ is a strip.
	Choose a point $p$ on $\Sigma$ that corresponds to $(0,1)$ if $S$ is a half-cylinder, or $(a/2,0)$ if $S$ is a strip. Fix a unit-speed geodesic $c: [0,\varepsilon) \to \hat{M}$ with $c(0) = p$ and $\dot{c}(0) \perp T_{c(0)} \Sigma$. Notice that since $\Sigma$ is minimal and $\partial M$ is mean convex, $c(t)$ does not intersect $\partial \hat{M}$ for $t\in[0,\varepsilon)$ when $\varepsilon$ is small. According to the metric perturbation construction in \cite{Carlotto-Chodosh-Eichmair-PMT}, for $\varepsilon < 1/10$, there exists a family of smooth Riemannian metrics $\{ \hat{g}(r,t) \}_{r,t \in (0,\varepsilon)}$ on $\hat{M}$ such that:
	\begin{enumerate}
	    \item $\hat{g}(r,t) \to \hat{g}$ in $C^3$ as $t, r \to 0^+$.
	    \item $\hat{g}(r,t) \to \hat{g}$ smoothly as $t \to 0^+$ for each fixed $r \in (0,\varepsilon)$.
	    \item $\hat{g}(r,t) = \hat{g}$ on $\{x \in \hat{M} : \mathrm{dist}_{\hat{g}}(x, c(2r)) \geq 3r \}$.
	    \item $\hat{g}(r,t) < \hat{g}$ as quadratic forms in $\{x \in \hat{M} : \mathrm{dist}_{\hat{g}}(x, c(2r)) < 3r \}$.
	    \item $\hat{g}(r,t)$ has positive scalar curvature in $\{x \in \hat{M} : r < \mathrm{dist}_{\hat{g}}(x, c(2r)) < 3r \}$.
	    \item $\hat{M}$ is weakly mean-convex with respect to $\hat{g}(r,t)$.
	\end{enumerate}

	The next step is to find a compact minimizer of the appropriate $\theta$-energy in some pre-compact open subset containing $\Sigma_h$, which differs from $\Sigma_h$ for a fixed $h>2$.
	There might be some regularity issues near $\partial \Sigma_h \backslash \partial^s \hat{M}$. To overcome this, we construct the domain in the following manner. This is inspired by the polyhedron comparison theorem of Li \cite{Li2019comparison}.

	Let $h\ge 2$ be an integer and $0<\delta<\frac{1}{10}$, we consider a small wedge-shaped neighborhood $U_h^\delta$ of $\Sigma_h$ in $\hat{M}$ described as follows:
    
	The boundary of $U_h^\delta$ consists of four parts: $\Sigma_h$, $\partial^s U_h^\delta=\partial U_h^\delta\cap \partial^s\hat{M}, \partial^fU_h^\delta, \partial^t U_h^\delta$. Each part is smooth and satisfies the following conditions:
	\begin{itemize}
		\item $\measuredangle _g (\Sigma_h, \partial^s U_h^\delta) = \theta$, $\measuredangle _g(\partial^t U_h^\delta, \partial^s U_h^\delta)=\pi-\theta$.
		\item $\measuredangle _g (\Sigma_h, \partial^f U_h^\delta)=\measuredangle _g(\partial^t U_h^\delta, \partial^f U_h^\delta)=\measuredangle _g(\partial^s U_h^\delta, \partial^f U_h^\delta)=\frac{\pi}{2}$.
		\item $\partial^tU_h^\delta$ is disjoint from $\Sigma_h$.
		\item The area of $\partial^f U_h^\delta$ is less than $\delta$, and the distance from any point in $\partial^f U_h^\delta$ to $\Sigma_h \cap \partial^f U_h^\delta$ is less than $\delta$.
    \end{itemize}
    \begin{figure}[ht]
        \centering
	\begingroup
	%\fontsize{7pt}{12pt}
	\def\svgwidth{0.8\columnwidth}
	\import{./figures/}{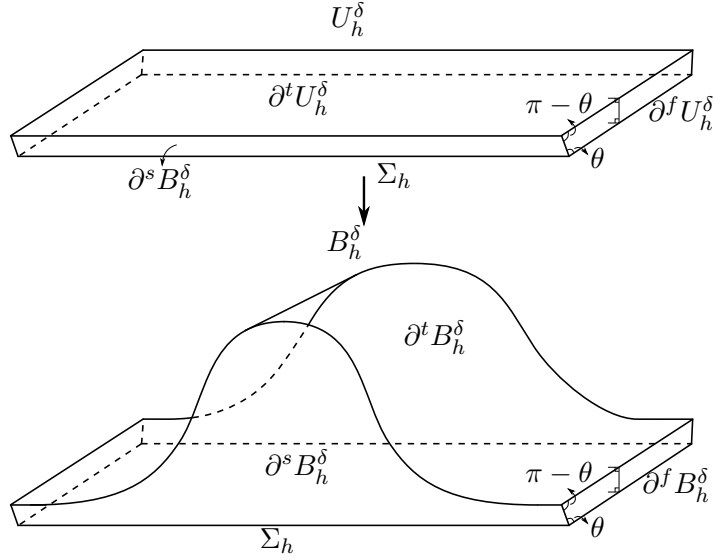}
	\endgroup

    	\caption{The choice of $U_h^\delta$ and $B_h^\delta$ when $S$ is a strip}
        \label{fig:domains}
    \end{figure}
    \begin{figure}[ht]
        \centering
	\begingroup
	%\fontsize{7pt}{12pt}
	\def\svgwidth{0.8\columnwidth}
	\import{./figures/}{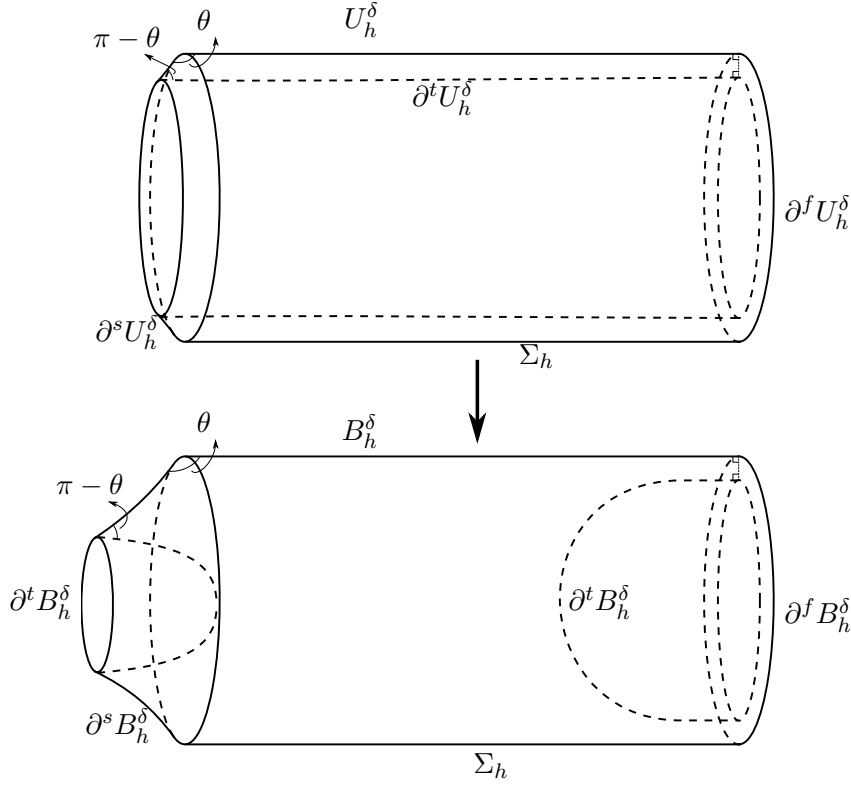}
	\endgroup

        \caption{The choice of $U_h^\delta$ and $B_h^\delta$ when $S$ is a half-cylinder}
        \label{fig:half-cylinderwedge}
    \end{figure}
    See Figure \ref{fig:domains} and Figure \ref{fig:half-cylinderwedge} for the shape of $U_h^\delta$ in the cases of half-cylinder and strip, respectively. 
	Indeed, $U_h^\delta$ looks like a curved-prism, with $\Sigma_h$ as the bottom face and $\partial^t U_h^\delta$ as the top face, and it has side faces $\partial^s U_h^\delta$ and $\partial^f U_h^\delta$.
	Note that $\partial^s U_h^\delta$ is disjoint from $\partial^f U_h^\delta$ when $S$ is a half-cylinder.

    Next, fix two small neighborhood $U_{1h}\subset U_{2h}$ of $\partial\Sigma\backslash \partial^s\hat{M}$.
    For instance, we can define 
    $$U_{1h}=\{ \mathrm{dist}_{\hat{g}}(x,\partial\Sigma_h\backslash \partial^s \hat{M})<\frac{1}{9} \}\text{ and }U_{2h}=\{ \mathrm{dist}_{\hat{g}}(x,\partial\Sigma_h\backslash \partial^s \hat{M})<\frac{1}{8} \}.$$
	We then choose $B_h^\delta$ to be a pre-compact open subset of $\hat{M}$, whose boundary consists of five parts: $\Sigma_h$, $\partial^s B_h^\delta=\partial B_h^\delta \cap \partial^s\hat{M}$, $\partial^fB_h^\delta$, $\partial^tB_h^\delta$ and $\partial^e B_h^\delta=B_h^\delta\cap \partial^e\hat{M}$. Each part is smooth, and they satisfy the following conditions:
	\begin{itemize}
		\item $U_h^\delta\subset B_h^\delta$.
		\item $B_h^\delta$ coincides with $U_h^\delta$ in $U_{1h}$. In particular, $\partial^fB_h^\delta=\partial^f U_h^\delta$.
		\item $B_h^\delta$ coincides with $B_h^{\delta'}$ outside $U_{2h}$ for any $0<\delta'<\delta<\frac{1}{10}$.
		\item $B_{h}^\delta\cap (\hat{M}\backslash U_{2h})\subset B_{h'}^\delta\cap (\hat{M}\backslash U_{2h'})$ for $h< h'$.
            \item $\bigcup_{h\ge 2}B_h^\delta\cap (\hat{M}\backslash U_{2h})=\hat{M}$.
		% \item $\{x\in \hat{M}: \mathrm{dist}_{\hat{g}}(x,\Sigma_{h /3})<h /3 \}\subset B^\delta_h$.
		\item $\measuredangle_{\hat{g}}(\Sigma_h, \partial^s B_h^\delta)=\theta$, $\measuredangle_{\hat{g}}(\partial^t B_h^\delta, \partial^s B_h^\delta)=\pi-\theta$.
		\item $\measuredangle_{\hat{g}}(\Sigma_h, \partial^f B_h^\delta)=\measuredangle_{\hat{g}}(\partial^t B_h^\delta, \partial^f B_h^\delta)=\measuredangle_{\hat{g}}(\partial^s B_h^\delta, \partial^f B_h^\delta)=\frac{\pi}{2}$.
	\end{itemize}
	See Figure \ref{fig:domains} and Figure \ref{fig:half-cylinderwedge} for the shape of $B_h^\delta$ in the cases of half-cylinder and strip, respectively. Topologically, $B_h^\delta$ looks like a curved-prism with $\Sigma_h$ as the bottom face and $\partial^t B_h^\delta$ as the top face, and it has side faces $\partial^s B_h^\delta$, $\partial^f B_h^\delta$, and $\partial^e B_h^\delta$.
	The third condition ensures that $|\partial^s B_h^\delta|_{\hat{g}}$ is uniformly bounded from above, independent of $\delta$.
	Note that $\partial^e B_h^\delta$ is disjoint from any other boundary component of $B_h^\delta$.
	
	Now, we define $\mathcal{E}$ to be the collection of all (relatively) open subsets $\Omega$ of $\hat{M}$ with finite perimeter in $B_h^\delta$ such that following holds:
	\begin{itemize}
		\item $\Omega \cap \Sigma_h =\emptyset$,
		\item $\partial^t B_h^\delta \subset \Omega$.
	\end{itemize}
	For each $\Omega \in \mathcal{E}$, we define the $\theta$-energy $E_\theta^{\hat{g}}$ under the metric $\hat{g}$ of $\Omega$ as
	\[
		E_\theta^{\hat{g}}(\Omega)=|\partial \Omega \cap \mathring B_h^\delta|_{\hat{g}}-\cos \theta |\partial \Omega\cap \partial^s B_h^\delta|_{\hat{g}}.
	\]
    Apparently, $E_\theta^{\hat{g}}(\Omega)$ has a finite lower bound since
    \[E_\theta^{\hat{g}}(\Omega)\geq -|\cos\theta||\partial^sB_h^\delta|_{\hat{g}}>-\infty.\]
    
    We choose a smooth metric $\hat{g}(r,t,h)$, conformal to $\hat{g}(r,t)$, such that the following conditions hold:
	\begin{itemize}
		\item Under such metric, each face of $B_h^\delta$ is mean convex, and $\Sigma_h$ and $\partial^t B_h^\delta$ is strictly mean convex somewhere.
		\item $(1-\delta)\hat{g}(r,t)\le \hat{g}(r,t,h)\le (1+\delta)\hat{g}(r,t)$.
	\end{itemize}
	In general, $\hat{g}(r,t,h)$ can also be chosen to agree with $\hat{g}(r,t)$ away from a small neighborhood of $\partial B_h^\delta\backslash\partial^eB_h^\delta$.
	% In general, $\hat{g}(r,t,h)$ can also be chosen to agree with $\hat{g}(r,t)$ in the region $\{ \mathrm{dist}_{\hat{g}}(p,\Sigma_{h /3})<h /3 \}$.
	According to Li's existence and regularity result \cite{Li2019comparison}, there exists a non-trivial minimizer $\Omega(r,t,h,\delta)$ of $E_\theta^{\hat{g}}$ in $\mathcal{E}$. Furthermore, the surface $\Sigma(r,t,h,\delta):=\partial \Omega(r,t,h,\delta) \cap \mathring B_h^\delta$ is smooth away from the corners and at least $C^{1,\alpha}$ up to the corners of $B_h^\delta$. See Figure \ref{fig:minimizer} for the shape of $\Sigma(r,t,h,\delta)$ when $S$ is a strip.

	\begin{figure}[ht]
        \centering
	\begingroup
	%\fontsize{7pt}{12pt}
	\def\svgwidth{0.8\columnwidth}
	%% Creator: Inkscape 1.3 (0e150ed, 2023-07-21), www.inkscape.org
%% PDF/EPS/PS + LaTeX output extension by Johan Engelen, 2010
%% Accompanies image file 'domainsMinimizer.pdf' (pdf, eps, ps)
%%
%% To include the image in your LaTeX document, write
%%   \input{<filename>.pdf_tex}
%%  instead of
%%   \includegraphics{<filename>.pdf}
%% To scale the image, write
%%   \def\svgwidth{<desired width>}
%%   \input{<filename>.pdf_tex}
%%  instead of
%%   \includegraphics[width=<desired width>]{<filename>.pdf}
%%
%% Images with a different path to the parent latex file can
%% be accessed with the `import' package (which may need to be
%% installed) using
%%   \usepackage{import}
%% in the preamble, and then including the image with
%%   \import{<path to file>}{<filename>.pdf_tex}
%% Alternatively, one can specify
%%   \graphicspath{{<path to file>/}}
%% 
%% For more information, please see info/svg-inkscape on CTAN:
%%   http://tug.ctan.org/tex-archive/info/svg-inkscape
%%
\begingroup%
  \makeatletter%
  \providecommand\color[2][]{%
    \errmessage{(Inkscape) Color is used for the text in Inkscape, but the package 'color.sty' is not loaded}%
    \renewcommand\color[2][]{}%
  }%
  \providecommand\transparent[1]{%
    \errmessage{(Inkscape) Transparency is used (non-zero) for the text in Inkscape, but the package 'transparent.sty' is not loaded}%
    \renewcommand\transparent[1]{}%
  }%
  \providecommand\rotatebox[2]{#2}%
  \newcommand*\fsize{\dimexpr\f@size pt\relax}%
  \newcommand*\lineheight[1]{\fontsize{\fsize}{#1\fsize}\selectfont}%
  \ifx\svgwidth\undefined%
    \setlength{\unitlength}{706.20891421bp}%
    \ifx\svgscale\undefined%
      \relax%
    \else%
      \setlength{\unitlength}{\unitlength * \real{\svgscale}}%
    \fi%
  \else%
    \setlength{\unitlength}{\svgwidth}%
  \fi%
  \global\let\svgwidth\undefined%
  \global\let\svgscale\undefined%
  \makeatother%
  \begin{picture}(1,0.36535186)%
    \lineheight{1}%
    \setlength\tabcolsep{0pt}%
    \put(0,0){\includegraphics[width=\unitlength,page=1]{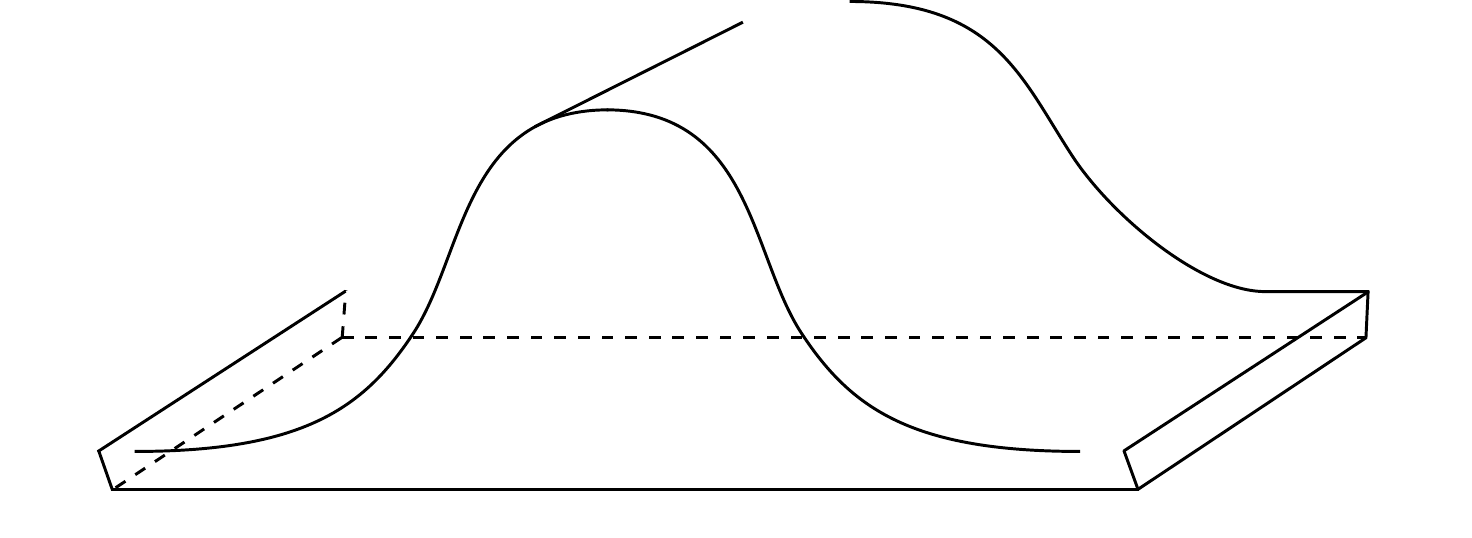}}%
    \put(0.38242052,-0.00073851){\makebox(0,0)[lt]{\lineheight{1.25}\smash{\begin{tabular}[t]{l}$\Sigma_h$\end{tabular}}}}%
    \put(0,0){\includegraphics[width=\unitlength,page=2]{domainsMinimizer.pdf}}%
    \put(0.51282191,0.30123575){\makebox(0,0)[lt]{\lineheight{1.25}\smash{\begin{tabular}[t]{l}$\partial^tB_h^\delta$\end{tabular}}}}%
    \put(0,0){\includegraphics[width=\unitlength,page=3]{domainsMinimizer.pdf}}%
    \put(0.35428191,0.05912148){\color[rgb]{1,0,0}\makebox(0,0)[lt]{\lineheight{1.25}\smash{\begin{tabular}[t]{l}$\Sigma(r,t,h,\delta)$\end{tabular}}}}%
  \end{picture}%
\endgroup%

	\endgroup

		\caption{The shape of $\Sigma(r,t,h,\delta)$ in the case of strip}
        \label{fig:minimizer}
    \end{figure}

	We claim that $\Sigma(r,t,h,\delta)$ intersects $\{x \in \hat{M} : \mathrm{dist}_{\hat{g}}(x, c(2r)) < 3r\}$ for sufficiently small $\delta=\delta(r,t,h)$.
	For simplicity, we denote $E_\theta^{\hat{g}}(B_h^\delta)=|\Sigma_h|_{\hat{g}}-\cos \theta |\partial^s B_h^\delta|_{\hat{g}}$.
	Since $\Sigma$ is absolutely $\theta$-energy-minimizing in $\hat{M}$ (in fact, being homologically$^*$ $\theta$-energy-minimizing is enough), let $\Omega':=B_h^\delta\backslash \Omega(r,t,h,\delta)$, we have
	\[
		|\Sigma_h|_{\hat{g}}\le |\Sigma(r,t,h,\delta)|_{\hat{g}}+|\partial \Omega' \cap \partial^f B_h^\delta|_{\hat{g}}+\cos \theta |\partial \Omega'\cap \partial^s B_h^\delta|_{\hat{g}}.
	\]
	Using the fact that $|\partial^f B_h^\delta|_{\hat{g}}<2\delta$, it follows that
	\[
		E_\theta^{\hat{g}}(B_h^\delta)\le E_\theta^{\hat{g}}(\Omega(r,t,h,\delta))+2\delta.
	\]
	%Otherwise,
     %by writing $K=\partial^i\llbracket\Omega_0\rrbracket$, we have
	Thus,
	\begin{align*}
		0<{} & E_{\theta}^{\hat{g}}(B_h^\delta)-E_{\theta}^{\hat{g}(r,t)}(B_h^\delta)\\ 
		\le& E_{\theta}^{\hat{g}}(\Omega(r,t,h,\delta))-E_{\theta}^{\hat{g}(r,t)}(B_h^\delta)+2\delta\\
		%0<{} & E_{\theta}^{\hat{g}}(B_h^\delta)-E_{\theta}^{\hat{g}(r,t)}(B_h^\delta)\quad\quad \text{by (iv) and (iii)}\\
		%\le{}& E_{\theta}^{\hat{g}}(\Omega(r,t,h,\delta))-E_{\theta}^{\hat{g}(r,t)}(B_h^\delta)\quad\quad \text{by the minimality of $B_h^\delta$}\\
		%={}& E_{\theta}^{\hat{g}(r,t)}(\Omega(r,t,h,\delta))-E_{\theta}^{\hat{g}(r,t)}(B_h^\delta)\quad\quad \text{by the assumption of }\Omega(r,t,h,\delta)\\
        =&E_{\theta}^{\hat{g}(r,t)}(\Omega(r,t,h,\delta))-E_{\theta}^{\hat{g}(r,t)}(B_h^\delta)+2\delta\\
		\le{}& \frac{1}{1-\delta}|\Sigma(r,t,h,\delta)|_{\hat{g}(r,t,h)}-\frac{\cos \theta }{1-\delta}|\partial^s \Omega(r,t,h,\delta)|_{\hat{g}(r,t,h)}\\
			 &+\frac{2\delta |\cos \theta|}{1-\delta}|\partial^s  \Omega(r,t,h,\delta)|_{\hat{g}(r,t)} - E_{\theta}^{\hat{g}(r,t)}(B_h^\delta)+2\delta\\%\quad\quad \text{using } \left|\frac{1}{1-\delta}\hat{g}(r,t,h)-\hat{g}(r,t)\right|\le \frac{2\delta}{1-\delta}\hat{g}(r,t)\\
		\le{}& \frac{1}{1-\delta}E_{\theta}^{\hat{g}(r,t,h)}(B_h^\delta)-E_{\theta}^{\hat{g}(r,t)}(B_h^\delta)+\frac{2\delta |\cos \theta|}{1-\delta}|\partial^s\Omega(r,t,h,\delta)|_{\hat{g}(r,t)}+2\delta\\
		\le{}&\frac{1+\delta}{1-\delta}E_{\theta}^{\hat{g}(r,t)}(B_h^\delta)-E_{\theta}^{\hat{g}(r,t)}(B_h^\delta)+\frac{2\delta |\cos \theta|}{1-\delta}|\partial^s \Omega(r,t,h,\delta)|_{\hat{g}(r,t)}\\
			 &+\frac{2\delta |\cos \theta|}{1-\delta}|\partial^s B_h^\delta|_{\hat{g}(r,t)}+2\delta\\
		\le{}&\frac{2\delta}{1-\delta}E_{\theta}^{\hat{g}(r,t)}(B_h^\delta)+\frac{4\delta |\cos \theta|}{1-\delta}|\partial^s B_h^\delta|_{\hat{g}(r,t)}+2\delta\\
		\le{}&\frac{2\delta}{1-\delta}|\Sigma_h|_{\hat{g}(r,t)}+\frac{6\delta|\cos \theta|}{1-\delta}|\partial^s B_h^\delta|_{\hat{g}(r,t)}+2\delta.
	\end{align*}
	%where we have use the fact that $\partial^s\Omega(r,t,h,\delta)\subset \partial^sB_h^\delta$ for the last inequality.
	Note that the right-hand side of the first line equals to $|\Sigma_h|_{\hat{g}}-|\Sigma_h|_{\hat{g}(r,t)}$, which is independent of $\delta$.
	Since $|\partial^s B_h^\delta|_{\hat{g}(r,t)}$ is bounded above independent of $\delta$, we can take $\delta$ sufficiently small to make the last line arbitrarily small, leading to a contradiction.

	Since $\Sigma(r,t,h,\delta(r,t,h))$ has a local area bound independent on $r,h,t$ by a comparison argument, we may pass to a subsequence of $\Sigma(r,t,h,\delta(r,t,h))$ as $h \rightarrow +\infty$, if necessary, to obtain a properly embedded limit surface $\Sigma(r,t)$ that is homologically$^*$ $\theta$-energy-minimizing under metric $\hat{g}(r,t)$ (See Proposition \ref{prop_convergence}).
	%Note that since $\partial \Sigma_h\backslash \partial^s \hat{M}$ goes to infinity as $h\to +\infty$, we actually know the limit $\Sigma(r,t)$ is indeed a smooth surface and it has contact angle $\theta$ with $\partial^s \hat{M}$.

When $r,t>0$ are very small, the surface $\Sigma(r,t)$ contains a curve $\gamma(r,t)$ that intersects $\{x \in \hat{M} : \mathrm{dist}_{\hat{g}}(x, c(2r)) < 3r \}$ and is close to the curve $\gamma$ on $\Sigma$. And the $\gamma(r,t)$ has a neighborhood in $\Sigma(r,t)$ that  converges smoothly to a ribbon or rectangular around $\gamma$ on $\Sigma$ depending whether $S$ is a half-cylinder or a strip (See \cite{chodoshsplittingtheorem}). Thus we can find a component $\hat{\Sigma}(r,t)$ of $\Sigma(r,t)$ and a curve $\gamma(r,t)$ within $\hat{\Sigma}(r,t)$ intersecting $\{ x \in \hat{M}: \mathrm{dist}_{\hat{g}}(x,c(2r))\le 3r \}$ which converges to $\gamma$ as $r,t \to 0^+$. Furthermore, $\gamma(r,t)$ can be a closed curve in $\hat{\Sigma}(r,t)$ when $S$ is a half-cylinder, and a compact curve in $\hat{\Sigma}(r,t)$ whose endpoints lie on $\partial \hat{\Sigma}(r,t)$ and it is perpendicular to $\partial \hat{\Sigma}(r,t)$ when $S$ is a strip.

	Now, let us determine the topology of $\hat{\Sigma}(r,t)$.
    \begin{lemma}
		\label{lem_topology}
		The topology of $\hat{\Sigma}(r,t)$ can be classified as follows when $r,t>0$ are sufficiently small:
		\begin{enumerate}[label=\textnormal{(\theenumi)}]
			\item If $S$ is a half-cylinder, then $\hat{\Sigma}(r,t)$ is either a half-cylinder or an annulus.
			\item If $S$ is a strip, and $S$ is absolutely $\theta$-energy-minimizing with its boundary lying on different boundary components of $M$,
				then $\hat{\Sigma}(r,t)$ is either a strip, or an annulus.
			\item If $S$ is a strip and is absolutely area-minimizing in the free boundary sense, then $\hat{\Sigma}(r,t)$ is either a strip, a half-cylinder, or an annulus.
			\end{enumerate}
	\end{lemma}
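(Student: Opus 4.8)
The plan proceeds in two halves: stability together with the curvature and boundary hypotheses forces $\hat\Sigma(r,t)$ into a short list of topological models, and then the distinguished curve $\gamma(r,t)$, the surjectivity in Lemma~\ref{cover}, and an energy comparison eliminate the forbidden ones.

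\emph{Step 1 (coarse classification).} As a connected component of $\Sigma(r,t)$, the surface $\hat\Sigma(r,t)$ is, by Propositions~\ref{prop:homologically_energy_minimizing} and~\ref{prop_convergence}, homologically$^*$ $\theta$-energy-minimizing in $(\hat M,\hat g(r,t))$; in particular it is a connected, properly embedded, two-sided, stable capillary minimal surface meeting $\partial^s\hat M$ at the constant angle $\theta$, with $\partial\hat\Sigma(r,t)\subset\partial^s\hat M$ nonempty. Since $\hat g(r,t)$ has nonnegative scalar curvature and makes $\partial\hat M$ mean-convex, the rigidity results of Section~\ref{background} apply: when $\hat\Sigma(r,t)$ is compact, inserting $\varphi\equiv 1$ into the stability inequality and combining the Gauss equation with Gauss--Bonnet as in the proof of Lemma~\ref{lemma:flatAnnulus} yields $\chi(\hat\Sigma(r,t))\ge 0$, so $\hat\Sigma(r,t)$ is a disc or an annulus; when it is noncompact, Lemma~\ref{lemma:flatnessBdry} gives that it is conformally a strip, a half-cylinder, or a half-plane. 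Hence $\hat\Sigma(r,t)$ is homeomorphic to one of these five models, and the task is to discard the superfluous ones in each of cases (1)--(3).

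\emph{Step 2 (the distinguished curve and $\pi_1$).} By construction a tubular neighbourhood of $\gamma(r,t)$ in $\hat\Sigma(r,t)$ converges smoothly, as $r,t\to 0^+$, to a neighbourhood of $\gamma$ in $S$; thus for small $r,t$ the curve $\gamma(r,t)$ is freely homotopic in $\hat M$ to $\gamma$ in the half-cylinder case, and is a properly embedded arc of $\hat\Sigma(r,t)$ whose two endpoints lie in the components of $\partial^s\hat M$ meeting the endpoints of $\gamma$ in the strip case. In case (2) those two components are distinct by hypothesis, so $\partial\hat\Sigma(r,t)$ has at least two components; of the five models only the annulus and the strip do, which settles (2). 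In case (1), after the lift of Lemma~\ref{cover} the homomorphism $i_*\colon\pi_1(S)\to\pi_1(\hat M)$ is surjective, and since $\pi_1(S)\cong\mathbb{Z}$ is generated by $[\gamma]=[\gamma(r,t)]$, the group $\pi_1(\hat M)$ is cyclic with this generator. If $\gamma(r,t)$ is not null-homotopic in $\hat\Sigma(r,t)$, then --- an essential embedded circle existing only in the annulus and the half-cylinder --- $\hat\Sigma(r,t)$ is one of these two, as claimed; so it remains only to exclude the situation that $\gamma(r,t)$ bounds a disc $D\subset\hat\Sigma(r,t)$, and, in case (3), that $\hat\Sigma(r,t)$ is a disc or a half-plane.

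\emph{Step 3 (the energy comparison, which I expect to be the main obstacle).} Suppose in case (1) that $\gamma(r,t)$ bounds a disc $D\subset\hat\Sigma(r,t)$. Because $\gamma(r,t)$ is $C^\infty$-close to $\gamma=\mathbb{S}^1\times\{1\}$ and $\partial\hat\Sigma(r,t)$ is $C^\infty$-close to $\partial S=\mathbb{S}^1\times\{0\}$, the disc $D$ must lie on the side of $\gamma(r,t)$ away from $\partial\hat\Sigma(r,t)$; hence $D$ lies in the interior of $\hat M$, and, near its boundary, follows the noncompact end $S_2=\mathbb{S}^1\times[1,\infty)$ of $S$. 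Since $\hat\Sigma(r,t)\cap K_L$ is $C^\infty$-close to $\mathbb{S}^1\times[0,L]$ for $r,t$ small depending on $L$, the disc $D$ contains $\hat\Sigma(r,t)\cap(K_L\setminus K_1)$, which is close to $\mathbb{S}^1\times[1,L]$ and hence has area $\gtrsim L$. On the other hand, comparing $\hat\Sigma(r,t)$ with the competitor obtained by replacing $D$ by the thin annulus cobounding $\gamma(r,t)$ and $\gamma$ followed by the compact piece $S_1=\mathbb{S}^1\times[0,1]$ of $S$ (and, if needed, a bounded piece of $\partial^s\hat M$), the homologically$^*$ $\theta$-energy-minimizing property of $\hat\Sigma(r,t)$ bounds $|D|$ above by $|S_1|$ plus a wetting term, a quantity independent of $L$ up to $o(1)$ errors. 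Letting $L\to\infty$ is a contradiction, so $\gamma(r,t)$ is essential and case (1) is complete. Case (3) is handled in the same spirit, but more cleanly: there $\theta=\pi/2$, the indefinite wetting term $\cos\theta\,W$ disappears, and no assumption on the boundary components of $M$ is needed; the curve $\gamma$ now splits the strip $S$ into two noncompact half-strips, and a disc or half-plane $\hat\Sigma(r,t)$ would again be forced, by its topology together with convergence to $S$ on large compacta, to enclose strictly more (unsigned) area than an admissible competitor built from a bounded piece of $S$, contradicting its area-minimizing property --- or, comparing in the reverse direction, the area-minimizing property of $S$ itself. The genuine difficulty throughout is to make these comparisons admissible: one must produce competitors with matching interior boundary and compactly supported connecting currents $T$ (which in case (1) uses that $\pi_1(\hat M)$ is then trivial, so that $\partial S$ is filled by a compact region), control the errors from the thin cobounding annuli and from the wetted regions, and balance the quantitative area lower bound against the competitor's energy; with this in hand, the classification of $\hat\Sigma(r,t)$ follows.
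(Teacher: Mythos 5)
Your Steps 1--2 coincide with the paper's reduction (stability plus Gauss--Bonnet/Lemma~\ref{lemma:flatnessBdry} give the five models with boundary; in case (2) the two boundary components of $\gamma(r,t)$ force annulus or strip), but the decisive step --- excluding the disc in case (1) and the disc/half-plane in case (3) --- contains a genuine gap. Your comparison surface replaces the subdisc $D\subset\hat\Sigma(r,t)$ bounded by $\gamma(r,t)$ with a thin annulus from $\gamma(r,t)$ to $\gamma$ together with the truncation $S_1=\mathbb{S}^1\times[0,1]$ of $S$. The resulting difference current has interior boundary containing a full copy of $\partial\Sigma=\partial S$, so admissibility in the homologically$^*$ $\theta$-energy-minimizing sense requires an integral current $T$ whose wetting part $\partial^s T$ is \emph{compactly supported} in $\partial^s\hat M$ with $\partial(\partial^sT)=\pm[\partial\Sigma]$, i.e.\ you must fill $\partial S$ by a compact region inside the boundary surface $\partial^s\hat M$ and control its area (it enters the energy with the factor $\cos\theta$). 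Your parenthetical justification --- that triviality of $\pi_1(\hat M)$ makes ``$\partial S$ filled by a compact region'' --- is not a valid deduction: simple connectivity of the $3$-manifold does not imply that a closed curve in its boundary bounds a compact region \emph{within that boundary surface}, and nothing else in your argument produces such a filling or an area bound for it. (A smaller inaccuracy: Lemma~\ref{cover} gives surjectivity of $\pi_1(S)\to\pi_1(M)$, not of $\pi_1(S)\to\pi_1(\hat M)$; the paper only uses null-homotopy of $\gamma$ and the separation Lemma~\ref{lem:separate}.)

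The paper's proof is built precisely to avoid this filling problem: instead of comparing with a truncation of $S$, it compares two putative minimizers at different parameter scales. If $\hat\Sigma(r_0,t_0)$ is a disc, it separates (Lemma~\ref{lem:separate}), and the region $\Omega_0$ with $\partial^i\llbracket\Omega_0\rrbracket=\llbracket\Sigma\rrbracket-\llbracket\hat\Sigma(r_0,t_0)\rrbracket$ has wetting part supported in the \emph{bounded annular strip} of $\partial^s\hat M$ between $\partial\Sigma$ and $\partial\hat\Sigma(r_0,t_0)$ (constancy theorem plus smooth convergence of the boundary curves), hence of finite area with no need to cap off $\partial\Sigma$. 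For $0<r\ll r_0$, $0<t\ll t_0$ the current $T=\llbracket\Omega\rrbracket-\llbracket\Omega_0\rrbracket$ is then admissible, giving $|\hat\Sigma(r,t)|\le|\hat\Sigma(r_0,t_0)|+\cos\theta\,|\partial^sT|<+\infty$, which contradicts $|\hat\Sigma(r,t)|\to\infty$; the half-plane case in (3) is handled analogously by joining the two compact pieces $\Delta(r_1,t_1)$ and $\Delta(r_2,t_2)$ by a small rectangle. Your blow-up-of-area heuristic is the right driving force, but to close the argument you would either have to adopt this two-scale comparison or actually construct (and bound) the compactly supported wetting current your competitor requires.
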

    \begin{proof}
	According to Lemma \ref{ambrozio}, we know $\chi(\Sigma)\ge 0$, which implies that $\Sigma$ is topologically a plane, a sphere, a torus, a cylinder, a half-plane, a disc, an annulus, a half-cylinder, or a strip.

	Note that by curvature estimate for capillary surfaces in \cite{hongcrelle} (see also \cite{liZhouZhu2024minMaxCapillary}), we know that $\hat{\Sigma}(r,t)$ converges to $\Sigma$ in any compact subset of $\hat{M}$, and $\partial \Sigma\neq \emptyset$, we have $\partial \hat{\Sigma}(r,t)\neq \emptyset$ for sufficiently small $r$ and $t$, and hence $\hat{\Sigma}(r,t)$ cannot be a plane, a sphere, a torus, or a cylinder.

	\vskip.2cm
    \noindent\textbf{Case: $S$ is a half-cylinder.}
	We claim that $\hat{\Sigma}(r,t)$ can only be an annulus or a half-cylinder.

	As $\hat{\Sigma}(r,t)$ converges to $\Sigma$ locally and $\Sigma$ has a compact boundary, we can rule out the possibility of $\hat{\Sigma}(r,t)$ being a half-plane, or a strip.

	We need the following lemma to exclude the possibility of $\hat{\Sigma}(r,t)$ being a disc, which is an easy adaptation of Lemma 3.2 in \cite{chodoshsplittingtheorem}.
	\begin{lemma}
		\label{lem:separate}
		If the inclusion $\Sigma \subset \hat{M}$ induces a trivial map $\pi_1(\Sigma)\rightarrow \pi_1(\hat{M})$, then every connected, compact surface $\Sigma'$ with $\partial \Sigma'\subset \partial^s \hat{M}$ in $\hat{M}$ is separating.
	\end{lemma}
        \begin{proof}
		It is easy to see that $M$ is simply-connected, since $i_*:\pi_1(\Sigma)\rightarrow \pi_1(M)$ is also surjective.
		We show that $\Sigma'$ separates $M$.
		If not, then we can find a simple closed curve $\gamma$ in $M$ such that it intersects $\Sigma'$ transversely at exactly one point.
		Hence $\Sigma'$ represents a nontrivial homology class in $H_{2}(M,\partial M)$, and $\gamma$ is not null-homotopic in $M$, which contradicts the fact that $M$ is simply-connected.
		Hence $\Sigma'$ separates $\hat{M}$ since $\Sigma' \cap \Sigma$ is empty.
	\end{proof}

	Now, we assume $\hat{\Sigma}(r_0,t_0)$ is a disc for some $r_0,t_0>0$.
	Since $\gamma(r_0,t_0)$ is null-homotopic, so is $\gamma$ for sufficiently small $r_0,t_0$.
	By Lemma \ref{lem:separate}, we know that $\hat{\Sigma}(r_0,t_0)$ is separating.
	Then, we can choose the region $\Omega_0$ such that $\partial^i \llbracket \Omega_0\rrbracket=\llbracket \Sigma\rrbracket-\llbracket \hat{\Sigma}(r_0,t_0)\rrbracket$. (Recall we have fixed the orientation of $\Sigma$ such that $\partial^i \llbracket \hat{M}\rrbracket=\llbracket \Sigma\rrbracket$.)
	Taking the boundary on the both sides, and using $\partial (\partial \llbracket \Omega_0\rrbracket \lfloor(\partial \hat{M}\backslash (\Sigma \cup \partial^s \hat{M})))=0$, we have $\partial\partial^s \llbracket \Omega_0\rrbracket=\partial \llbracket \hat{\Sigma}(r_0,t_0)\rrbracket-\partial \llbracket \Sigma\rrbracket$.
	Then, since $\hat{\Sigma}(r_0,t_0)$ converges to $\Sigma$ as $r_0,t_0\to 0^+$ locally, together with the constancy theorem (\cite[Theorem 26.27]{simon1983lectures}), we know the support of $\partial^s \llbracket \Omega_0\rrbracket$ is indeed the region bounded by $\partial \Sigma$ and $\partial \hat{\Sigma}(r_0,t_0)$ in $\partial^s \hat{M}$.
	Moreover, as $\hat{\Sigma}(r_0,t_0)$ converges to $\Sigma$ in the neighborhood of $\partial \Sigma$ smoothly, we can assume that the support of $\partial^s \llbracket \Omega_0\rrbracket$ is a bounded annular region by choosing $r_0,t_0$ small enough.
	%If we assume $\partial M$ only has finite many components, then we choose $r_0,t_0$ small enough such that the support of $\partial^s\llbracket \Omega_0\rrbracket$ is exactly the region bounded by $\partial \Sigma$ and $\partial \hat{\Sigma}(r_0,t_0)$ in $\partial \hat{M}$.
	%In particular, $\hat{\Sigma}(r_0,t_0)=\Sigma\pm \partial^i\Omega_0$, where the sign depends on the orientation of $\Sigma$ and $\hat{\Sigma}(r_0,t_0)$.
	Hence, $|\partial^s \Omega_0|_{\hat{g}}<+\infty$.
	Now, we can show that $\hat{\Sigma}(r,t)$ cannot be a disc for sufficiently small $0<r\ll r_0,0<t\ll t_0$.
	Again, we know that $\hat{\Sigma}(r,t)$ and $\Sigma$ bounds a region $\Omega$ in $\hat{M}$ with $\partial^i \llbracket \Omega\rrbracket=\llbracket \Sigma\rrbracket-\llbracket \hat{\Sigma}(r,t)\rrbracket$.
	Then, we have $\partial\partial^s \llbracket \Omega\rrbracket=\partial \llbracket \hat{\Sigma}(r,t)\rrbracket-\partial \llbracket \Sigma\rrbracket$.
	%$\hat{\Sigma}(r,t)=\Sigma\pm \partial^i\Omega$.
	We consider the current $T=\llbracket \Omega\rrbracket -\llbracket \Omega_0\rrbracket$.%, which satisfies $\partial^s T=\partial \llbracket \hat{\Sigma}(r,t)\rrbracket-\partial \llbracket \hat{\Sigma}(r_0,t_0)\rrbracket$.
	Note that $\mathrm{spt}\partial^sT\subset \mathrm{spt}\partial^s\llbracket \Omega_0\rrbracket$ for sufficiently small $r,t>0$, we know $|\partial^sT|_{\hat{g}}<+\infty$.
	Since $\hat{\Sigma}(r,t)$ is homologically$^*$ $\theta$-energy-minimizing % with respect to $\partial^s \hat{M}$ 
    by Proposition \ref{prop:homologically_energy_minimizing}, we have
	\[
		|\hat{\Sigma}(r,t)|_{\hat{g}(r,t)}\le |\hat{\Sigma}(r_0,t_0)|_{\hat{g}(r,t)}+\cos \theta|\partial^sT|_{\hat{g}}<+\infty.
	\]
	%Then, we know $\hat{\Sigma}(r_0,t_0)=\hat{\Sigma}(r,t)\pm \partial^i\Omega'$.
	But the area of $\hat{\Sigma}(r,t)$ goes to infinity as $r,t\to 0^+$, which contradicts the above inequality.
	Therefore, $\hat{\Sigma}(r,t)$ cannot be a disc for sufficiently small $r,t>0$.

	Therefore, $\hat{\Sigma}(r,t)$ can only be a half-cylinder or an annulus.

\begin{figure}[ht]
    \centering
	\begingroup
	%\fontsize{7pt}{12pt}
	\def\svgwidth{0.8\columnwidth}
	%% Creator: Inkscape 1.3 (0e150ed, 2023-07-21), www.inkscape.org
%% PDF/EPS/PS + LaTeX output extension by Johan Engelen, 2010
%% Accompanies image file 'half-cylinder-disc.pdf' (pdf, eps, ps)
%%
%% To include the image in your LaTeX document, write
%%   \input{<filename>.pdf_tex}
%%  instead of
%%   \includegraphics{<filename>.pdf}
%% To scale the image, write
%%   \def\svgwidth{<desired width>}
%%   \input{<filename>.pdf_tex}
%%  instead of
%%   \includegraphics[width=<desired width>]{<filename>.pdf}
%%
%% Images with a different path to the parent latex file can
%% be accessed with the `import' package (which may need to be
%% installed) using
%%   \usepackage{import}
%% in the preamble, and then including the image with
%%   \import{<path to file>}{<filename>.pdf_tex}
%% Alternatively, one can specify
%%   \graphicspath{{<path to file>/}}
%% 
%% For more information, please see info/svg-inkscape on CTAN:
%%   http://tug.ctan.org/tex-archive/info/svg-inkscape
%%
\begingroup%
  \makeatletter%
  \providecommand\color[2][]{%
    \errmessage{(Inkscape) Color is used for the text in Inkscape, but the package 'color.sty' is not loaded}%
    \renewcommand\color[2][]{}%
  }%
  \providecommand\transparent[1]{%
    \errmessage{(Inkscape) Transparency is used (non-zero) for the text in Inkscape, but the package 'transparent.sty' is not loaded}%
    \renewcommand\transparent[1]{}%
  }%
  \providecommand\rotatebox[2]{#2}%
  \newcommand*\fsize{\dimexpr\f@size pt\relax}%
  \newcommand*\lineheight[1]{\fontsize{\fsize}{#1\fsize}\selectfont}%
  \ifx\svgwidth\undefined%
    \setlength{\unitlength}{621.61332258bp}%
    \ifx\svgscale\undefined%
      \relax%
    \else%
      \setlength{\unitlength}{\unitlength * \real{\svgscale}}%
    \fi%
  \else%
    \setlength{\unitlength}{\svgwidth}%
  \fi%
  \global\let\svgwidth\undefined%
  \global\let\svgscale\undefined%
  \makeatother%
  \begin{picture}(1,0.27846349)%
    \lineheight{1}%
    \setlength\tabcolsep{0pt}%
    \put(0,0){\includegraphics[width=\unitlength,page=1]{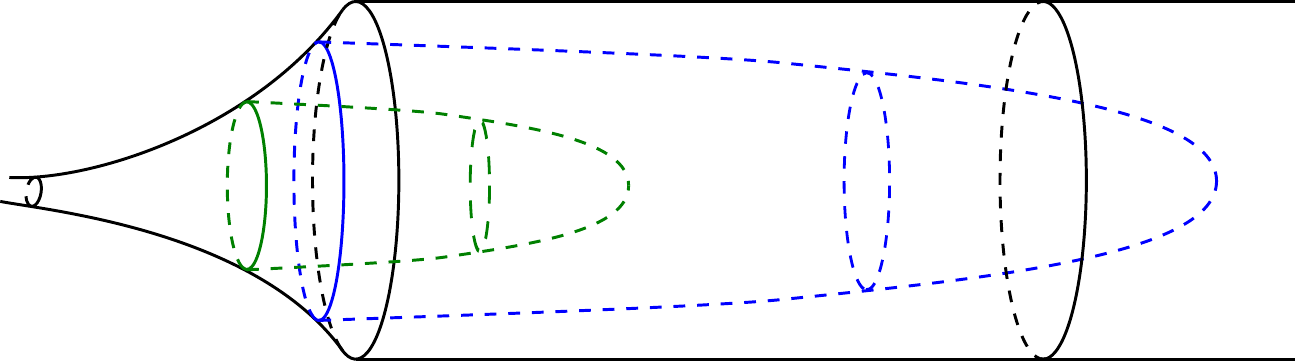}}%
    \put(0.9532334,0.12009437){\color[rgb]{0,0,1}\makebox(0,0)[lt]{\lineheight{1.25}\smash{\begin{tabular}[t]{l}$\hat{\Sigma}(r,t)$\end{tabular}}}}%
    \put(0.49597372,0.12655397){\color[rgb]{0,0.50196078,0}\makebox(0,0)[lt]{\lineheight{1.25}\smash{\begin{tabular}[t]{l}$\hat{\Sigma}(r_0,t_0)$\end{tabular}}}}%
    \put(0.86420944,0.24603214){\makebox(0,0)[lt]{\lineheight{1.25}\smash{\begin{tabular}[t]{l}$\Sigma$\end{tabular}}}}%
  \end{picture}%
\endgroup%

	\endgroup

	\caption{A disc converging to a half-cylinder}
    \label{fig:half-cylinder-disc}
\end{figure}
\vskip.2cm
	\noindent\textbf{Case: $S$ is a strip.}
	If we assume that each component of $\partial S$ lies on the different boundary components of $M$, then we can immediately obtain that $\hat{\Sigma}(r,t)$ is a strip or an annulus since the boundary of $\hat{\Sigma}(r,t)$ has at least two components as $r,t$ small enough. Thus, the second case is established.

	We now focus on the final case in the lemma.
	The key is to use the minimizing property to rule out the possibility of $\hat{\Sigma}(r,t)$ being a disc or a half-plane.
	%if we only assume $S$ is absolutely area-minimizing in the free boundary case, we are still able to claim that $\hat{\Sigma}(r,t)$ is a strip, a half-cylinder, or an annulus.
	It is not hard to see that $\hat{\Sigma}(r,t)$ is homologically$^*$ area-minimizing in the free boundary sense in $\hat{M}$.

	If $\hat{\Sigma}(r,t)$ is a disc, similar to the earlier case, together with Lemma \ref{lem:separate}, it follows that $\hat{\Sigma}(r,t)$ bounds a region in $\hat{M}$.
	This contradicts the fact that $\Sigma(r,t)$ is homologically$^*$ area-minimizing in the free boundary sense in $\hat{M}$.

	Now, suppose $\hat{\Sigma}(r,t)$ is a half-plane. Let $\Delta(r,t)$ denote the half-disk in $\hat{\Sigma}(r,t)$ bounded by $\gamma(r,t)$, which is the compact component of $\hat{\Sigma}(r,t) \setminus \gamma(r,t)$. As $\hat{\Sigma}(r,t)$ converges to $\Sigma$, the area $|\Delta(r,t)|_{\hat{g}}$ diverges to infinity as $r, t \to 0^+$. Choose parameters $0 < r_2 \ll r_1$ and $0 < t_2 \ll t_1$ such that $\hat{\Sigma}(r_1, t_1)$ and $\hat{\Sigma}(r_2, t_2)$ are both half-planes.
	Because $\gamma(r,t)$ converges to $\gamma$, we can find a disk $D$ (essentially a curved rectangle) in $\hat{M}$ such that $\partial D \cap \mathring{\hat{M}} = \gamma(r_1, t_1) \cup \gamma(r_2, t_2)$ and $|D|_{\hat{g}}$ is sufficiently small.
	See Figure \ref{fig:half-plane} for an illustration of $D$, $\Delta(r_1,t_1)$, and $\Delta(r_2,t_2)$.

	\begin{figure}[ht]
		\centering
	\begingroup
	%\fontsize{7pt}{12pt}
	\def\svgwidth{0.8\columnwidth}
	%% Creator: Inkscape 1.3 (0e150ed, 2023-07-21), www.inkscape.org
%% PDF/EPS/PS + LaTeX output extension by Johan Engelen, 2010
%% Accompanies image file 'half-planecase.pdf' (pdf, eps, ps)
%%
%% To include the image in your LaTeX document, write
%%   \input{<filename>.pdf_tex}
%%  instead of
%%   \includegraphics{<filename>.pdf}
%% To scale the image, write
%%   \def\svgwidth{<desired width>}
%%   \input{<filename>.pdf_tex}
%%  instead of
%%   \includegraphics[width=<desired width>]{<filename>.pdf}
%%
%% Images with a different path to the parent latex file can
%% be accessed with the `import' package (which may need to be
%% installed) using
%%   \usepackage{import}
%% in the preamble, and then including the image with
%%   \import{<path to file>}{<filename>.pdf_tex}
%% Alternatively, one can specify
%%   \graphicspath{{<path to file>/}}
%% 
%% For more information, please see info/svg-inkscape on CTAN:
%%   http://tug.ctan.org/tex-archive/info/svg-inkscape
%%
\begingroup%
  \makeatletter%
  \providecommand\color[2][]{%
    \errmessage{(Inkscape) Color is used for the text in Inkscape, but the package 'color.sty' is not loaded}%
    \renewcommand\color[2][]{}%
  }%
  \providecommand\transparent[1]{%
    \errmessage{(Inkscape) Transparency is used (non-zero) for the text in Inkscape, but the package 'transparent.sty' is not loaded}%
    \renewcommand\transparent[1]{}%
  }%
  \providecommand\rotatebox[2]{#2}%
  \newcommand*\fsize{\dimexpr\f@size pt\relax}%
  \newcommand*\lineheight[1]{\fontsize{\fsize}{#1\fsize}\selectfont}%
  \ifx\svgwidth\undefined%
    \setlength{\unitlength}{637.37078809bp}%
    \ifx\svgscale\undefined%
      \relax%
    \else%
      \setlength{\unitlength}{\unitlength * \real{\svgscale}}%
    \fi%
  \else%
    \setlength{\unitlength}{\svgwidth}%
  \fi%
  \global\let\svgwidth\undefined%
  \global\let\svgscale\undefined%
  \makeatother%
  \begin{picture}(1,0.31914012)%
    \lineheight{1}%
    \setlength\tabcolsep{0pt}%
    \put(0,0){\includegraphics[width=\unitlength,page=1]{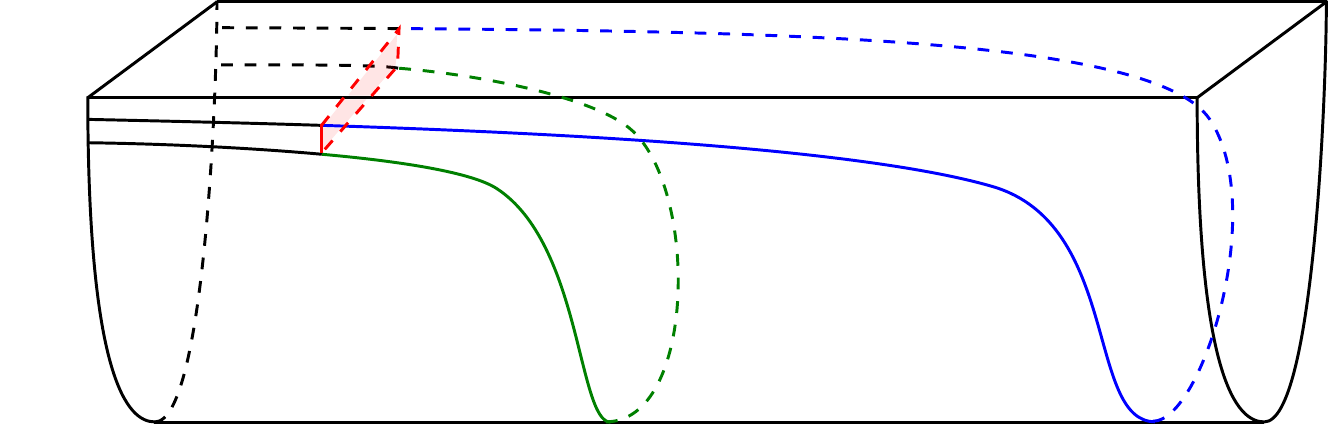}}%
    \put(0.26851675,0.06104535){\color[rgb]{0,0.50196078,0}\makebox(0,0)[lt]{\lineheight{1.25}\smash{\begin{tabular}[t]{l}$\Delta(r_1,t_1)$\end{tabular}}}}%
    \put(0.71408274,0.21282999){\color[rgb]{0,0,1}\makebox(0,0)[lt]{\lineheight{1.25}\smash{\begin{tabular}[t]{l}$\Delta(r_2,t_2)$\end{tabular}}}}%
    \put(0.2179451,0.16874059){\color[rgb]{1,0,0}\makebox(0,0)[lt]{\lineheight{1.25}\smash{\begin{tabular}[t]{l}$D$\end{tabular}}}}%
    \put(1.02480637,0.31550297){\makebox(0,0)[lt]{\lineheight{1.25}\smash{\begin{tabular}[t]{l}$\Sigma$\end{tabular}}}}%
    \put(-0.10502922,0.18469562){\makebox(0,0)[lt]{\lineheight{1.25}\smash{\begin{tabular}[t]{l}$\hat{\Sigma}(r_1,t_1)$\end{tabular}}}}%
    \put(-0.11134655,0.2382549){\makebox(0,0)[lt]{\lineheight{1.25}\smash{\begin{tabular}[t]{l}$\hat{\Sigma}(r_2,t_2)$\end{tabular}}}}%
    \put(0.60533862,0.06441361){\makebox(0,0)[lt]{\lineheight{1.25}\smash{\begin{tabular}[t]{l}$\Omega$\end{tabular}}}}%
  \end{picture}%
\endgroup%

	\endgroup

		\caption{Choice of $D$, $\Delta(r_1,t_1)$, and $\Delta(r_2,t_2)$}
		\label{fig:half-plane}
	\end{figure}

	Note that $\Delta(r_1,t_1)\cup \Delta(r_2,t_2)\cup D$ is homeomorphic to a disk.
	Then, by the previous arguments, together with Lemma \ref{lem:separate}, we know that $\Delta(r_1,t_1)\cup \Delta(r_2,t_2)\cup D$ bounds a possibly unbounded region $\Omega$ in $\hat{M}$, but $$|\Delta(r_2,t_2)|_{\hat{g}}>|\Delta(r_1,t_1)|_{\hat{g}}+|D|_{\hat{g}}$$for sufficiently small $r_2$ and $t_2$.
	This contradicts the fact that $\Sigma(r_2,t_2)$ is homologically$^*$ area-minimizing in the free boundary sense. Thus, $\hat{\Sigma}(r,t)$ cannot be a half-plane.

	In general, if $S$ is a strip, we cannot exclude the possibility that $\hat{\Sigma}(r,t)$ is a half-cylinder. See Figure \ref{fig:strip} for example.

    \begin{figure}[ht]
        \centering
	\begingroup
	%\fontsize{7pt}{12pt}
	\def\svgwidth{0.8\columnwidth}
	\import{./figures/}{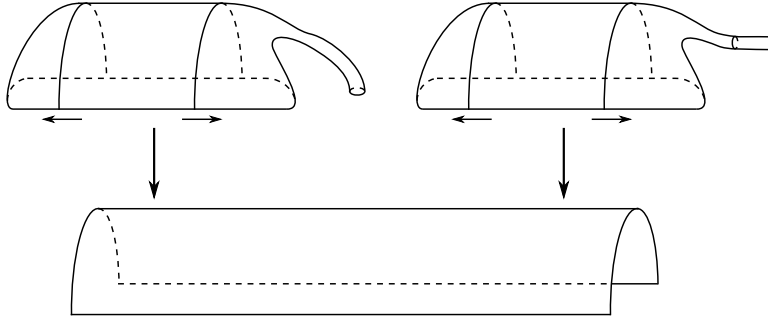}
	\endgroup

    	\caption{An annulus or half-cylinder converging to a strip}
        \label{fig:strip}
    \end{figure}
\vskip.3cm

	In conclusion, $\hat{\Sigma}(r,t)$ must be a half-cylinder, annulus (see Figure \ref{fig:converge}), or strip (shows up only when $S$ is a strip).
    \end{proof}
    \begin{figure}[ht]
        \centering
	\begingroup
	%\fontsize{7pt}{12pt}
	\def\svgwidth{0.8\columnwidth}
	\import{./figures/}{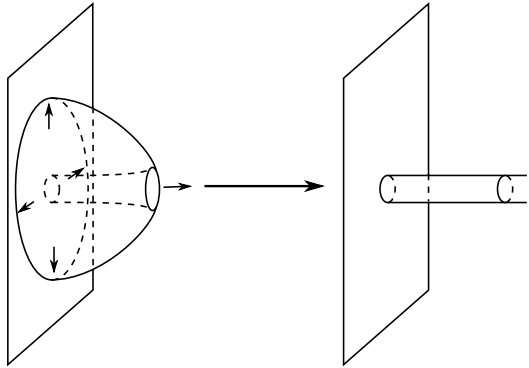}
	\endgroup

        \caption{An annulus converging to a half-cylinder}
        \label{fig:converge}
    \end{figure}

    Let's go back to the proof of main theorem.
	By Lemma \ref{lemma:flatAnnulus} and Lemma \ref{lemma:flatnessBdry}, for $r,t>0$ sufficiently small, $\hat{\Sigma}(r,t)$ intersects either $\{x \in \hat{M} : \mathrm{dist}_g(x, c(2r)) = 3r\}$ or $\{x \in \hat{M} : \mathrm{dist}_g(x, c(2r)) <r \}$. Fixing small $r > 0$ and taking the limit as $t \to 0^+$, we obtain a properly embedded surface $\Sigma(r)$ as the limit of $\Sigma(r,t)$ after passing to a subsequence. Let $\hat{\Sigma}(r)$ denote the component of $\Sigma(r)$ intersecting $\{x \in \hat{M} : \mathrm{dist}_g(x, c(2r)) \leq  3r\}$.
	Since $\Sigma(t)$ is homologically$^*$ $\theta$-energy-minimizing in $\hat{M}$, the above argument implies that $\hat{\Sigma}(r)$ is either a half-cylinder, annulus, or strip (only when $S$ is a strip).

	Now, we consider the case that $S$ is a half-cylinder.
	If $\hat{\Sigma}(t)$ is an annulus, since $M$ is smooth manifold, then by Lemma \ref{lemma:flatAnnulus} and a continuation argument, we conclude that $\theta=\frac{\pi}{2}$ and $(\hat{M}, \hat{g})$ is isometric to either $([0, +\infty) \times \hat{\Sigma}(t), dt^2 + g_{\hat{\Sigma}(t)})$ or $([0, a] \times \hat{\Sigma}(t), dt^2 + g_{\hat{\Sigma}(t)})$ for some $a > 0$.
	%Note that $\theta=\frac{\pi}{2}$ for this case.

	Hence, we assume $\hat{\Sigma}(r)$ is a half-cylinder for all $r>0$.
	Lemma \ref{lemma:flatAnnulus} implies that $\hat{\Sigma}(r)$ is totally geodesic, flat, and that the ambient Ricci tensor vanishes in the normal direction of $\hat{\Sigma}(r)$ along $\hat{\Sigma}(r)$.
	Thus we obtain a sequence of totally geodesic intrinsically flat free boundary half-cylinders in $M$ along which the normal Ricci curvature vanishes and that converge to $\Sigma$ from one side.
Now, we want to write $\hat{\Sigma}(r)$ as a graph over $\Sigma$ and analyze the asymptotic behavior of such graph function.
Let $N$ be the unit normal vector field of $\Sigma$ in $\hat{M}$, and $X$ be a smooth tangential vector field along $\Sigma$ such that $X+N$ is tangential to $\partial M$ along $\partial \Sigma$.
	We can extend $X+N$ to a smooth vector field $V$ on a neighborhood of $\Sigma$ and keep $V$ tangential to $\partial M$ along $\partial M$.
%To do that, we choose a smooth tangential vector field $X$ along $\Sigma$ such that $X+N$ is tangential to $\partial M$ along $\partial \Sigma$ and extend $X+N$ to a smooth vector field $V$ on a neighborhood of $\Sigma$ and $V$ is tangential to $\partial M$ along $\partial M$.
	Let $\varphi(x,t)$ be the flow generated by $-V$.
We also select a sequence of pre-compact subset $U_i$ with $U_i\subset U_{i+1}$ and $\cup_i U_i=M$.
For each $U_i$, we can always choose $r_i>0$ small enough such that
$\hat{\Sigma}(r_i)\cap U_i$ can be written as $\{ \varphi(x,f_i(x)):x \in U_i' \}$ where $U_i'\subset \Sigma'$ and $f_i(t)$ is a positive smooth function defined on $U_i'$.
Then, we can define the (positive) limit of the scaled functions, i.e., $f=\lim_{i\rightarrow +\infty} \frac{f_i}{f_i(p)}$, where $p$ is a pre-chosen point on $\Sigma$.

Differentiating the second fundamental form, we find that $f$ satisfies
    \[(\nabla^2f)(X,Y)+\operatorname{R}_M(X,\nu,Y,\nu)=0\]
    for any tangent vectors on $\Sigma$.
    Taking the trace yields $\Delta f=0$. Moreover, differentiating the angle function shows that $\frac{\partial f}{\partial \eta}=0$, where $\eta$ is the unit outer normal vector of $\partial\Sigma$ in $\Sigma$.
	By extending the function $f$ to a positive function $\tilde{f}$ on $\mathbb{R}^2$, we obtain that $f$ is a positive constant on $\Sigma$. We see that $\operatorname{R}_M(X,\nu,Y,\nu)=0$ for any tangent vectors on $\Sigma$. The Codazzi equation and the Gauss equation imply that
$R_M(X,Y,Z,\nu)=0$ and $R_M(X,Y,Z,W)=0$. It follows that the ambient curvature
tensor vanishes along $\Sigma$.
    Repeating this argument with $\hat{\Sigma}(r)$ in place of $S$, we obtain a collection of half-cylinders $\mathcal{S}:=\left\{ \Sigma': \Sigma'\text{ is homologically$^*$ $\theta$-energy-minimizing}\right\}$.
Moreover, $\cup _{\Sigma' \in \mathcal{S}}\Sigma'$ is dense in $\hat{M}$.
This implies $\hat{M}$ is indeed flat and together with the contact angle condition, $\hat{M}$ is isometric to one of the following:
$$\left\{ (x_1,x_2,x_3)\in \mathbb{S}^1 \times \mathbb{R}^2:x_2\ge \cot \theta x_3\ \text{and} \ 0\le x_2\le b \right\}$$ for some $b>0$ or $$\left\{ (x_1,x_2,x_3)\in \mathbb{S}^1 \times \mathbb{R}^2:x_2\ge \cot \theta x_3\ \text{and}\ x_3\ge 0 \right\}.$$

%which are homologically$^*$ $\theta$-energy-minimizing with respect to $\partial^s\hat{M}$ and it is dense near $\Sigma$.
%Then, by a continuation argument, we find that $\hat{M}$ is isometric to $[0,a]\times \mathbb{R}^+\times \mathbb{R}$ for some $a > 0$.
	%by \cite[Theorem 1.6]{Carlotto-Chodosh-Eichmair-PMT} or \cite{liugang}, the ambient Riemann curvature tensor also vanishes along $\hat{\Sigma}(r)$. 
	%Repeating this argument for each $\hat{\Sigma}(r)$, together with a continuation argument, we can find a smooth local foliation $\hat{\Sigma}(r)$ for $r \in [ 0, \varepsilon)$ of $\hat{M}$ near $\Sigma$, each of which is isometric to a standard half-cylinder.
	%Note that $\hat{\Sigma}(r)$ may differ from the earlier $\hat{\Sigma}(r)$.
	%The above argument implies that such a foliation is indeed isometric to $\{ (x_1,x_2,t)\in \mathbb{S}^1 \times \mathbb{R} \times [0,\varepsilon):x_2\ge \alpha(t) \}$ for some function $\alpha$ defined on $[0,\varepsilon)$.
	%Using the contact angle condition, we see that $\alpha'(t)=\cot \theta$ and hence we can choose $\alpha=\cot \theta t$.
	%Again, by continuation, we find the cover of $M$ is isometric to $\{ (x_1,x_2,t)\in \mathbb{S}^1 \times \mathbb{R} \times \mathbb{R}:x_2\ge \cot \theta t \}$.

	If $S$ is a strip, 
	we assume $\hat{\Sigma}(r)$ is a strip for all $r > 0$. If this is not the case, the splitting theorem for annuli or half-cylinders applies to derive the splitting result. Analogous reasoning shows that near $\Sigma$, $\hat{M}$ is isometric to $\{ (x_1,x_2,t)\in \mathbb{R}\times \mathbb{R} \times [0,\varepsilon):x_1\in [\cot \theta t, a-\cot \theta t] \}$.
	However, using the continuation argument, we find $M$ has singular boundary components if $\theta\neq \frac{\pi}{2}$.
	Consequenctly, $\theta$ must be $\frac{\pi}{2}$, and $\hat{M}$ is isometric to either $[0,a]\times \mathbb{R}^+\times [0,b]$ for some $b>0$ or $[0,a]\times \mathbb{R}\times \mathbb{R}^+$.
	%$(\hat{M}, \hat{g})$ is isometric to either $[0, +\infty) \times \mathbb{R} \times [0, b]$ or $[0, a] \times \mathbb{R} \times [0, b]$ for some $a, b > 0$.
\end{proof}

\bibliographystyle{alpha}
\bibliography{mybib}

\begin{thebibliography}{CLZ24b}

\bibitem[Amb15]{ambroziorigidity}
Lucas~C. Ambrozio.
\newblock Rigidity of area-minimizing free boundary surfaces in mean convex three-manifolds.
\newblock {\em J. Geom. Anal.}, 25(2):1001--1017, 2015.

\bibitem[AR89]{anderson-rodriguez}
Michael~T. Anderson and Lucio Rodr\'{\i}guez.
\newblock Minimal surfaces and {$3$}-manifolds of nonnegative {R}icci curvature.
\newblock {\em Math. Ann.}, 284(3):461--475, 1989.

\bibitem[BBN10]{braybrendleneves}
Hubert Bray, Simon Brendle, and Andre Neves.
\newblock Rigidity of area-minimizing two-spheres in three-manifolds.
\newblock {\em Comm. Anal. Geom.}, 18(4):821--830, 2010.

\bibitem[CCE16]{Carlotto-Chodosh-Eichmair-PMT}
Alessandro Carlotto, Otis Chodosh, and Michael Eichmair.
\newblock Effective versions of the positive mass theorem.
\newblock {\em Invent. Math.}, 206(3):975--1016, 2016.

\bibitem[CEM19]{chodoshsplittingtheorem}
Otis Chodosh, Michael Eichmair, and Vlad Moraru.
\newblock A splitting theorem for scalar curvature.
\newblock {\em Comm. Pure Appl. Math.}, 72(6):1231--1242, 2019.

\bibitem[CG00]{caigalloway}
Mingliang Cai and Gregory~J. Galloway.
\newblock Rigidity of area minimizing tori in 3-manifolds of nonnegative scalar curvature.
\newblock {\em Comm. Anal. Geom.}, 8(3):565--573, 2000.

\bibitem[CK92]{croke-bruce}
Christopher~B. Croke and Bruce Kleiner.
\newblock A warped product splitting theorem.
\newblock {\em Duke Math. J.}, 67(3):571--574, 1992.

\bibitem[CLZ24a]{zhu2}
Jianchun Chu, Man-Chun Lee, and Jintian Zhu.
\newblock Homological $n$-systole in $(n+1)$-manifolds and bi-ricci curvature.
\newblock {\em arXiv:2410.20785}, 2024.

\bibitem[CLZ24b]{zhu1}
Jianchun Chu, Man-Chun Lee, and Jintian Zhu.
\newblock On kähler manifolds with non-negative mixed curvature.
\newblock {\em arXiv:2408.14043}, 2024.

\bibitem[ER24]{EspinarRosenberg}
Jos\'e~M Espinar and Harold Rosenberg.
\newblock Frankel property, maximum principle at infinity and splitting theorems for complete minimal hypersurfaces.
\newblock {\em arXiv:2211.06392}, 2024.

\bibitem[Esp12]{Espinar}
Jos\'{e}~M. Espinar.
\newblock Rigidity of stable cylinders in three-manifolds.
\newblock {\em Proc. Amer. Math. Soc.}, 140(5):1769--1775, 2012.

\bibitem[FCS80]{Fischer-Colbrie-Schoen-The-structure-of-complete-stable}
Doris Fischer-Colbrie and Richard Schoen.
\newblock The structure of complete stable minimal surfaces in {$3$}-manifolds of nonnegative scalar curvature.
\newblock {\em Comm. Pure Appl. Math.}, 33(2):199--211, 1980.

\bibitem[HS23]{hongcrelle}
Han Hong and Artur~B. Saturnino.
\newblock Capillary surfaces: stability, index and curvature estimates.
\newblock {\em J. Reine Angew. Math.}, 803:233--265, 2023.

\bibitem[Kas83]{kasue}
Atsushi Kasue.
\newblock Ricci curvature, geodesics and some geometric properties of {R}iemannian manifolds with boundary.
\newblock {\em J. Math. Soc. Japan}, 35(1):117--131, 1983.

\bibitem[Li19]{Li2019comparison}
Chao Li.
\newblock A polyhedron comparison theorem for 3-manifolds with positive scalar curvature.
\newblock {\em Inventiones mathematicae}, 219(1):1--37, July 2019.

\bibitem[Liu13]{liugang}
Gang Liu.
\newblock 3-manifolds with nonnegative {R}icci curvature.
\newblock {\em Invent. Math.}, 193(2):367--375, 2013.

\bibitem[LZZ24]{liZhouZhu2024minMaxCapillary}
Chao Li, Xin Zhou, and Jonathan~J Zhu.
\newblock Min-max theory for capillary surfaces.
\newblock {\em Journal f{\"u}r die reine und angewandte Mathematik (Crelles Journal)}, (0), 2024.

\bibitem[MM15]{micallefmoraru}
Mario Micallef and Vlad Moraru.
\newblock Splitting of 3-manifolds and rigidity of area-minimising surfaces.
\newblock {\em Proc. Amer. Math. Soc.}, 143(7):2865--2872, 2015.

\bibitem[Nab80]{Nabonnand}
Philippe Nabonnand.
\newblock Sur les vari\'{e}t\'{e}s riemanniennes compl\`etes \`a courbure de {R}icci positive.
\newblock {\em C. R. Acad. Sci. Paris S\'{e}r. A-B}, 291(10):A591--A593, 1980.

\bibitem[Nun13]{nunesrigidity}
Ivaldo Nunes.
\newblock Rigidity of area-minimizing hyperbolic surfaces in three-manifolds.
\newblock {\em J. Geom. Anal.}, 23(3):1290--1302, 2013.

\bibitem[RL22]{longacapillary}
Eduardo Rosinato~Longa.
\newblock Low index capillary minimal surfaces in {R}iemannian 3-manifolds.
\newblock {\em J. Geom. Anal.}, 32(4):Paper No. 143, 21, 2022.

\bibitem[Sim83]{simon1983lectures}
Leon Simon.
\newblock {\em Lectures on geometric measure theory}.
\newblock Centre for Mathematical Analysis, Australian National University, Canberra, 1983.

\bibitem[SY82]{Schoen-Yau-positive-Ricci-curvature}
Richard Schoen and Shing~Tung Yau.
\newblock Complete three-dimensional manifolds with positive {R}icci curvature and scalar curvature.
\newblock In {\em Seminar on {D}ifferential {G}eometry}, volume No. 102 of {\em Ann. of Math. Stud.}, pages 209--228. Princeton Univ. Press, Princeton, NJ, 1982.

\end{thebibliography}
\end{document}